\setlist[enumerate]{label=(\roman*)}
\newtheorem{lem}{Lemma}[section]
\newtheorem{prop}[lem]{Proposition}
\newtheorem{claim*}{Claim}
\newtheorem{rmk}[lem]{Remark}
\newtheorem{thm}[lem]{Theorem}
\theoremstyle{definition}
\newtheorem{rem}[lem]{Remark}
\newtheorem{defn}[lem]{Definition}
\newtheorem{ex}[lem]{Example}
\newcommand{\G}{{\mathbb G}}
\newcommand{\PP}{{\mathbb P}}
\newcommand{\C}{{\mathbb C}}
\newcommand{\Q}{{\mathbb Q}}
\newcommand{\R}{{\mathbb R}}
\newcommand{\Z}{{\mathbb Z}}
\newcommand{\N}{{\mathbb N}}
\newcommand{\calC}{{\mathcal C}}
\newcommand{\calO}{{\mathcal O}}
\newcommand{\calT}{{\mathcal T}}
\newcommand{\frakU}{{\mathfrak U}}
\newcommand{\frakX}{{\mathfrak X}}
\newcommand{\frakC}{{\mathfrak C}}
\DeclareMathOperator{\HH}{H}
\DeclareMathOperator{\Hom}{Hom}
\DeclareMathOperator{\Spec}{Spec}
\DeclareMathOperator{\trop}{trop}
\DeclareMathOperator{\Trop}{Trop}
\DeclareMathOperator{\val}{val}
\DeclareMathOperator{\ob}{ob}
\DeclareMathOperator{\indeg}{in}	% initial degeneration
\newcommand{\isom}{\cong}
\newtheorem*{question*}{Question}
\numberwithin{equation}{section}
\numberwithin{table}{section}
\newcommand{\defi}[1]{\emph{#1}} % for defined terms
\title{Faithful realizability of tropical curves}
\author{Man-Wai Cheung}
\curraddr{Department of Mathematics, University of California, San Diego, La Jolla, CA 92093-0112, USA}
\email{m1cheung@ucsd.edu}
\urladdr{http://www.math.ucsd.edu/~m1cheung/}
\author{Lorenzo Fantini}
\curraddr{Centre de Math\'ematiques Laurent Schwartz de l'\'Ecole Polytechnique, 91128 Palaiseau, France}
\email{lorenzo.fantini@polytechnique.edu}
\urladdr{www.math.polytechnique.fr/~fantini/}
\author{Jennifer Park}
\curraddr{Department of Mathematics, McGill University, Montr\'eal, Qu\'ebec, Canada}
\email{jennifer.park2@mcgill.ca}
\urladdr{www.math.mcgill.ca/jpark/}
\author{Martin Ulirsch}
\curraddr{Department of Mathematics, Brown University, Providence, RI 02912, USA}
\email{ulirsch@math.brown.edu}
\urladdr{http://www.math.brown.edu/~ulirsch/}
\date{\today}
\begin{document}

\begin{abstract}
We study whether a given tropical curve $\Gamma$ in $\mathbb{R}^n$ can be realized as the tropicalization of an algebraic curve whose non-archimedean skeleton is faithfully represented by $\Gamma$.
We give an affirmative answer to this question for a large class of tropical curves that includes all trivalent tropical curves, but also many tropical curves of higher valence.
We then deduce that for every metric graph $G$ with rational edge lengths there exists a smooth algebraic curve in a toric variety whose analytification has skeleton $G$, and the corresponding tropicalization is faithful.
Our approach is based on a combination of the theory of toric schemes over discrete valuation rings and logarithmically smooth deformation theory, expanding on a framework introduced by Nishinou and Siebert. 
%More precisely, we interpret the vanishing of the cohomological obstruction of our deformation problem in terms of a combinatorial condition, the non-superabundance of $\Gamma$.
\end{abstract}

\maketitle

\section{Introduction}

\subsection{} Let $N$ be a finitely generated free abelian group, write $M=\Hom(N,\Z)$ for its dual, and set $N_{\R} := N \otimes_{\Z} \R$.
Let $K$ be a non-archimedean field, let $T$ be the split algebraic torus $\Spec K[M]$, and let $T^{\textup{an}}$ be the non-archimedean analytic space associated to $T$ in the sense of \cite{Berkovich90}. 
Following \cite{Gubler07}, \cite{Gubler13}, and \cite{EKL}, one can define a continuous tropicalization map 
$\trop\mathrel{\mathop:}T^{an}\longrightarrow N_\R$. 
%from the non-Archimedean analytic space $T^{an}$ in the sense of Berkovich \cite{Berkovich90} associated to the split algebraic torus $T=\Spec k[M]$ over a non-Archimedean field $k$ into the real vector space $N\otimes\R$. 
%A point $x \in T^{an}$ corresponds to a multiplicative seminorm $\vert \cdot \vert_x$ on $K[M]$ extending the norm on $K$ and its image $\trop(x)\in N_\R$ is defined by the equation
%\begin{equation*}
%\big\langle m,\trop(x)\big\rangle = -\log\big\vert\chi^m\big\vert_x
%\end{equation*}
%for all $m\in M$, where $\chi^m$ denotes the character in $K[M]$ associated to $m$.
%
Given a curve $C$ in $T$, its \emph{tropicalization} $\Trop(C)$ is the subset of $N_\R$ defined by
$\Trop(C):=\trop(C^{an})$.
 By the Bieri-Groves Theorem \cite{BieriGroves84}*{Theorem A} and \cite{EKL}*{Theorem 2.2.3} the set $\Trop(C)$ can be canonically endowed with the structure of a 1-dimensional rational polyhedral complex.
By using the lattice length on $N_\R$ with respect to $N$, $\Trop(C)$ can also be seen as a metric graph, with some non-compact edges which have infinite length.
Moreover, we can associate weights to the edges of $\Trop(C)$  which satisfy a natural \emph{balancing condition}.

Abstracting from these properties and following \cite{Mikhalkin05}, we define a \emph{tropical curve} as a balanced weighted $1$-dimensional rational polyhedral complex; 
% $\Gamma$ in $N_\R$; 
see Definition \ref{D: tropicalcurve} for a precise definition. %In particular, $\Trop(C)$ is a tropical curve for every curve $C$.
It is then natural to ask which tropical curves can be realized as the tropicalization of an algebraic curve in $T$; this is known as the problem of \emph{realizability} of tropical curves.
For the basics of tropical geometry we refer the reader to \cite{Mikhalkin05}, \cite{Gubler13} and \cite{MaclaganSturmfels14}.

\subsection{} Let us now assume that $K$ is nontrivially valued, and denote by $R$ its valuation ring. 
Let $C$ be a smooth, complete, and connected curve over $K$.
While the underlying topological space of the non-archimedean analytic curve $C^{an}$ is an infinite graph, Berkovich shows in \cite{Berkovich90}*{Section 4.3} that it has the homotopy type of a finite graph. 
More precisely, he associates to every semistable $R$-model $\calC$ of $C$, i.e. a flat and proper $R$-scheme $\calC$ with generic fiber $C$ and nodal special fiber $\calC_s$, a subset $\Sigma_\calC$ of $C^{an}$, called a \emph{skeleton}, and shows that it is a deformation retract of $C^{an}$. 
Note that a semistable model $\calC$ of $C$ always exists, possibly after a finite separable base change, by the semistable reduction theorem \cite{DeligneMumford69}.
As an abstract graph, the skeleton $\Sigma_C$ is the dual graph of the special fiber $\calC_s$ of $\calC$, and it can be naturally endowed with the structure of a metric graph. 

%As an abstract graph, the skeleton $\Sigma_C$ can be described explicitly as the dual graph of the special fiber $\calC_s$ of $\calC$. 
%Recall that a dual graph of a curve $\calC_s$ is the finite graph containing a vertex for every irreducible component of $\calC_s$ and an edge between two (not necessarily distinct) vertices whenever the corresponding components meet in a node. 
%Moreover, there is a metric on $\Sigma_\calC$ which is defined as follows: the \'etale local equation of a node is of the form $xy=r$ for some $r$ in the maximal ideal of $R$, and the length of the corresponding edge of $\Sigma_\calC$ is defined to be $\val(r)$; we then endow $\Sigma_\calC$ with the shortest path metric.

Let $K^{\textup{alg}}$ denote a fixed algebraic closure of $K$. 
Given a finite set $V$ of $K^{\textup{alg}}$-points of $C$, we can associate an enlarged skeleton $\Sigma_{\calC, V}$ which contains $\Sigma_\calC$ and has new edges of infinite length corresponding to the points of $V$.
Again, there is an embedding of this enlarged skeleton as a deformation retract $\Sigma_{\calC,V}$ of $C^{an}$.
We refer the reader to \cite{BakerPayneRabinoff14}*{Section 3} for the details of this construction.

\subsection{} Given an embedding of $C$ into a toric variety $X$ with big torus $T$, we say that the corresponding tropicalization is \emph{faithful} with respect to a skeleton $\Sigma_{\calC,V}$ if $\trop$ induces an isometric homeomorphism from $\Sigma_{\calC,V}\cap T^{an}$ onto its image in $\Trop(C\cap T)$.
In addition, if $\Sigma_{\calC,V}\cap T^{an}$ surjects onto $\Trop(C\cap T)$, we say that the tropicalization is \emph{totally faithful} with respect to $\Sigma_{\calC,V}$. Examples in \cite{BPR11}*{Section 2.5} show that many tropicalizations are not faithful.
In this article we study the following refinement of the question of realizability:

\begin{question*}[Faithful realizability]
\label{question_faithfulrealizability}
Given a tropical curve $\Gamma\subseteq N_\R$, can we find a smooth, complete and connected curve $C$ over $K$, a skeleton $\Sigma_{\calC,V}$ of $C^{an}$, and an embedding of $C$ into a toric variety $X$ with dense torus $T$ such that 
\begin{equation*}
\Trop(C\cap T)=\Gamma
\end{equation*} and the tropicalization is totally faithful with respect to $\Sigma_{\calC,V}$? 
\end{question*}

This question is also related to the problem of faithful tropicalization investigated in \cite{BPR11} for curves and in \cite{GublerRabinoffWerner14} in higher dimension.

%This question is a refinement of the classical question of realizability of tropical curves that asks whether there is an algebraic curve $C$ that can be realized as a subvariety of a toric variety $X$ with dense torus $T$ such that $\Trop(C\cap T)=\Gamma$, where $\Gamma \subseteq N_{\R}$ is a tropical curve.

\subsection{}

We give a positive answer to both parts of the question of faithful realizability, working over a finite extension $\C((t^{1/\ell}))$ of $\C((t))$, when $\Gamma\subseteq N_\R$ is a tropical curve with rational edge lengths and fulfills the following three conditions:

\begin{itemize}
\item\label{condition_nonsuperabundant} $\Gamma$ is \emph{non-superabundant} (Definition \ref{D: abundancy}).
\item\label{condition_smooth} $\Gamma$ is \emph{smooth} (Definition \ref{definition_smooth_tropical_curve}).
\item\label{condition_valency} $\Gamma$ is \emph{3-colorable} (Definition \ref{definition_tame_tropical_curve}).
\end{itemize}

Non-superabundancy for tropical curves is a natural genericity condition, originally introduced in \cite{Mikhalkin05}, roughly saying that the deformation space of $\Gamma$ as an embedded metric graph in $N_\R$ has the expected dimension (see the paragraph after Definition \ref{D: abundancy}); in our situation we adopt the point of view of \cite{Katz12lifting}.
The notion of 3-colorability is a mild combinatorial condition which is always satisfied by trivalent tropical curves and tropical curves of genus at most three.
More precisely we prove the following result:

\begin{thm}\label{T: one}
Let $\Gamma$ be a non-superabundant, smooth and 3-colorable tropical curve with rational edge lengths. Then there exists
\begin{itemize} 
\item a finite extension $K=\C((t^{1/\ell}))$ of $\C((t))$ with valuation ring $R$,
\item a toric scheme $\frakX$ over $R$ with big torus $T$, 
\item a complete smooth curve $C$ over $K$, 
\item a semistable $R$-model $\calC$ of $C$ together with an embedding of $\calC$ into $\frakX$, and
\item a finite set $V\subseteq C(K^{alg})$ of marked points, 
\end{itemize}
such that 
\begin{equation*}
\Trop(C\cap T)=\Gamma
\end{equation*}
and the tropicalization is totally faithful with respect to $\Sigma_{\calC,V}$. 
\end{thm}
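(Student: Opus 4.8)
The plan is to follow, and push to higher valence, the degeneration-and-smoothing strategy of Nishinou and Siebert. \emph{Step 1 (a toric degeneration adapted to $\Gamma$).} After rescaling and choosing $\ell$ divisible enough that every edge length of $\Gamma$ lies in $\frac{1}{\ell}\Z$, put $K=\C((t^{1/\ell}))$ and $R=\C[[t^{1/\ell}]]$; passing to a ramified extension is forced here, because the value group of $K$ must realize the given rational edge lengths as lattice lengths. Next, extend $\Gamma$ to a complete rational polyhedral decomposition $\calP$ of $N_\R$ containing $\Gamma$ in its $1$-skeleton; this is the step in which smoothness and $3$-colorability of $\Gamma$ do their work, namely to arrange in addition that $\calP$ be unimodular --- equivalently, that the toric scheme it defines be regular with reduced special fiber. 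Placing $\calP$ at height $1$ in $N_\R\times\R$ and coning over it gives a complete fan $\widetilde{\calP}$, hence a proper flat toric scheme $\frakX$ over $R$ with big torus $T$: its generic fiber is a toric variety $X$ over $K$, and its special fiber $\frakX_s$ is a reduced union of smooth toric varieties $X_v$, one for each vertex $v$ of $\calP$ (the toric variety of the star of $v$), glued along toric strata as dictated by the combinatorics of $\calP$.

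\emph{Step 2 (a pre-log curve on the special fiber).} Next, build a nodal curve $C_0\hookrightarrow\frakX_s$ whose dual graph, with edge metric recorded by node thicknesses, is $\Gamma$. To a vertex $v$ of $\Gamma$ attach an irreducible rational curve $C_v\hookrightarrow X_v$ that is a \emph{line}: it meets the toric boundary divisor $D_e$ of each edge $e$ at $v$ with multiplicity the weight $w_e$, and meets no deeper stratum; smoothness of $\Gamma$ at $v$ guarantees that such a $C_v$ exists (the local model is a line in a projective space). For each bounded edge $e$ of $\Gamma$ joining $v$ and $w$, glue $C_v$ and $C_w$ transversally over the point of $D_e$ prescribed by the balancing and matching conditions --- possible after translating each $C_v$ by a suitable element of the torus of $X_v$, and possible precisely because $\Gamma$ is balanced; this matching of tropical with algebraic data is the combinatorial core of the Nishinou--Siebert method. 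The unbounded edges of $\Gamma$ then prescribe a finite set $V\subseteq C(K^{alg})$ of marked points, namely the points where the curve meets the toric boundary of $X$. Finally, equip $\frakX_s$ and $C_0$ with their natural logarithmic structures, so that $C_0$ is log smooth over the standard log point and $C_0\to\frakX_s$ is a log morphism.

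\emph{Step 3 (smoothing).} Deform the log morphism $C_0\to\frakX_s$ over $\Spec R$ with its standard log structure. By logarithmic smooth deformation theory the obstructions to lifting the morphism order by order lie in a group $H^1(C_0,\calN)$ for the appropriate logarithmic normal sheaf $\calN$ of $C_0$ in $\frakX$, and the decisive point is that non-superabundancy of $\Gamma$ translates precisely into the vanishing of these obstruction classes --- the combinatorial deformations of $\Gamma$ inside $N_\R$ already account for the relevant cohomology. Hence a formal lift exists, and it algebraizes by properness, possibly after enlarging $\ell$ by a further bounded factor (still permitted by the statement). This produces a closed embedding $\calC\hookrightarrow\frakX$ over $R$ with $\calC_s=C_0$; log smoothness forces the total space to have only ordinary nodal singularities along $C_0$ and the generic fiber $C$ to be smooth, so $\calC$ is a semistable $R$-model of a smooth, complete, connected curve $C$ over $K$ (connectedness of $C$ coming from that of $\Gamma$). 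I expect this to be the main obstacle: identifying $\calN$ and its $H^1$ correctly, genuinely establishing the vanishing of obstructions from non-superabundancy, and controlling the algebraization.

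\emph{Step 4 (faithfulness).} It remains to check that $\Trop(C\cap T)=\Gamma$ and that the tropicalization is totally faithful with respect to $\Sigma_{\calC,V}$. The embedding of $\calC$ into the toric scheme $\frakX$ computes the tropicalization: by the theory of toric schemes over a discrete valuation ring, $\trop$ sends $C^{\mathrm{an}}\cap T^{\mathrm{an}}$ onto the one-dimensional polyhedral subset of $N_\R$ swept out by $\calP$ along the components of $C_0$, which is exactly $\Gamma$. On the other hand the Berkovich skeleton $\Sigma_\calC$ of $C^{\mathrm{an}}$ is the dual graph of $\calC_s=C_0$ with edge lengths the node thicknesses, and the toric construction of Step 1 makes these thicknesses equal to the lattice lengths of the corresponding edges of $\Gamma$; adjoining the infinite ends attached to the points of $V$ gives $\Sigma_{\calC,V}$. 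Since $\Gamma$ is genuinely embedded in $N_\R$, the map $\trop$ carries $\Sigma_{\calC,V}\cap T^{\mathrm{an}}$ bijectively --- vertices to vertices and edges to edges of matching length --- onto $\Gamma$; hence it is an isometric homeomorphism onto its image and is surjective, i.e. the tropicalization is totally faithful. Steps 1, 2 and 4, though combinatorially involved, are essentially formal once the toric dictionary for tropicalizations and skeletons is in hand; the weight of the theorem is carried by Step 3.
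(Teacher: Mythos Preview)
Your outline follows the paper's Nishinou--Siebert-style strategy, but there is a real gap in Step~2, caused by misplacing the role of $3$-colorability. The paper does \emph{not} complete $\Gamma$ to a polyhedral decomposition of $N_\R$; it simply cones over the cells of $\Gamma$ themselves (and explicitly remarks that it prefers not to compactify). Neither smoothness nor $3$-colorability has anything to do with unimodularity of a fan. Smoothness is used in Step~2 to identify each component $X_v$ of $\frakX_s$ with $P_d\times\G_m^{n-d}$ so that ``lines'' make sense; $3$-colorability is used, also in Step~2, for the very matching problem you attribute to balancing and torus translation.

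That matching argument works in the trivalent case but fails at a vertex $v$ of valence $d+1\geq 4$. If three or more neighbours of $v$ have already been constructed, you must produce a line in $P_d\subset\PP^d$ through three or more prescribed boundary points, and three general points in $\PP^d$ are not collinear --- no torus translation repairs this. The paper's fix is precisely the $3$-colorability hypothesis: order the vertices so that each new $v_i$ has at most two previously constructed neighbours; then $C_{v_i}$ is a line through at most two prescribed points, which always exists. Two smaller discrepancies: in Step~3 the paper does not argue via vanishing of $H^1(\calN)$ but via surjectivity of $H^1(C_0,\Theta_{C_0/O_0})\to H^1(C_0,f_0^\ast\Theta_{\frakX/O})$ (lifts of the abstract curve always exist since $H^2=0$; surjectivity lets one adjust the lift so that the embedding extends), and it identifies this map combinatorially with a factor of $\Phi_\Gamma\otimes\C$; in Step~4 it verifies that all initial degenerations are smooth and irreducible and invokes the multiplicity-one faithfulness criterion of Baker--Payne--Rabinoff rather than arguing directly via node thicknesses.
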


Now we can combine Theorem \ref{T: one} with a result of \cite{CDMY}, which states the following: 
Given a metric graph $G$ with rational edge lengths, there is a non-superabundant, smooth and 3-colorable tropical curve $\Gamma$ in $\R^n$ whose skeleton is $G$. We will recall their result and adjust it to our needs in Theorem \ref{thm: combinatorics}. This leads to the following theorem:

\begin{thm}\label{T: two}
For every metric graph $G$ with rational edge lengths there exists  
\begin{itemize}
\item a tropical curve $\Gamma$ in $\R^n$, where
\begin{equation*}
n=\max\big\{3,\max\{\deg v-1\vert v\in E(G)\}\big\} \ ,
\end{equation*} 
\item a finite extension $K=\C((t^{1/\ell}))$ of $\C((t))$ with valuation ring $R$,
\item a toric scheme $\frakX$ over $R$ with big torus $T=\G_m^n$, 
\item a complete smooth curve $C$ over $K$,
\item a semistable $R$-model $\calC$ of $C$ together with an embedding of $\calC$ into $\frakX$, and
\item a finite set $V\subseteq C\big(K^{alg})$ of marked points, 
\end{itemize}
such that 
\begin{equation*}
\Trop(C\cap T)=\Gamma \ ,
\end{equation*}
the skeleton $\Sigma_\calC$ %of $C^{an}$ 
is equal to $G$, and the tropicalization is totally faithful with respect to $\Sigma_{\calC,V}$. 
\end{thm}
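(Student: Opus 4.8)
The plan is to combine the combinatorial input of Theorem~\ref{thm: combinatorics} with Theorem~\ref{T: one}. Given a metric graph $G$ with rational edge lengths, I would first invoke Theorem~\ref{thm: combinatorics} to produce a tropical curve $\Gamma$ in $\R^n$, with $n$ the integer appearing in the statement, that is non-superabundant, smooth, and $3$-colorable, has rational edge lengths, and whose skeleton is $G$; here I use the formulation recorded in Theorem~\ref{thm: combinatorics}, arranged so that the compact part of $\Gamma$, that is, the union of its vertices and bounded edges, is exactly $G$ as a metric graph, up to the insertion of inessential bivalent vertices. Realizing $\Gamma$ inside $N_\R$ with $N=\Z^n$, its ambient torus is $T=\G_m^n$.

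I would then apply Theorem~\ref{T: one} to this $\Gamma$. As $\Gamma$ satisfies the three hypotheses and has rational edge lengths, Theorem~\ref{T: one} immediately supplies a finite extension $K=\C((t^{1/\ell}))$ with valuation ring $R$, a toric scheme $\frakX$ over $R$ with big torus $T=\G_m^n$, a complete smooth curve $C$ over $K$, a semistable $R$-model $\calC$ of $C$ together with an embedding $\calC\hookrightarrow\frakX$, and a finite set $V\subseteq C(K^{alg})$ of marked points, in such a way that $\Trop(C\cap T)=\Gamma$ and the tropicalization is totally faithful with respect to $\Sigma_{\calC,V}$. Every item and every conclusion of Theorem~\ref{T: two} is then in place except the equality $\Sigma_\calC=G$.

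For that, total faithfulness provides an isometric homeomorphism $\trop\colon \Sigma_{\calC,V}\cap T^{an}\to\Gamma$ that is onto. Since $\Sigma_\calC$ is compact, the unbounded edges of $\Sigma_{\calC,V}$ are exactly those attached to the marked points of $V$, and for the map above to be an isometry onto all of $\Gamma$ these must be carried precisely onto the unbounded rays of $\Gamma$ (so that the marked points lie over the toric boundary $\frakX\setminus T$). Hence $\trop$ restricts to an isometry from the union of the bounded edges of $\Sigma_{\calC,V}$, which is $\Sigma_\calC$, onto the compact part of $\Gamma$; by the first step the latter is $G$, so $\Sigma_\calC=G$ as metric graphs, as desired.

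The one place where there is genuine content beyond concatenating the two theorems is the compatibility between the skeleton of the tropical curve $\Gamma$ in the sense used in \cite{CDMY} and the skeleton $\Sigma_\calC$ of a semistable model as in Theorem~\ref{T: one}. I expect the main point to be verifying that Theorem~\ref{thm: combinatorics} can indeed be \emph{adjusted to our needs} so that the compact part of $\Gamma$ is literally $G$, and not merely has $G$ as its minimal skeleton, since it is precisely this compact part that the faithful tropicalization recovers as $\Sigma_\calC$; otherwise $\calC$ would be a semistable model of $C$ whose dual graph is strictly larger than $G$. A routine bookkeeping point is to track the value of $n$ and the identification $T=\G_m^n$ through both steps, which is unproblematic because the lattice underlying $\Gamma$ is $\Z^n$ throughout.
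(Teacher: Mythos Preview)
Your proposal is correct and takes essentially the same approach as the paper: the paper's own proof is a one-liner stating that Theorem~\ref{T: two} follows from Theorem~\ref{T: one} together with Theorem~\ref{thm: combinatorics}, and you do exactly this concatenation. Your additional paragraph explaining why $\Sigma_{\calC}=G$ via the total faithfulness isometry is a reasonable expansion of what the paper leaves implicit; note that your worry in the last paragraph is largely unnecessary, since Theorem~\ref{thm: combinatorics} as stated already asserts that the skeleton of $\Gamma$ is isomorphic to $G$ as a metric graph, and any extra bivalent vertices introduced by the construction are inessential in the category of metric graphs.
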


In particular, if $G$ has no univalent vertices then the skeleton $\Sigma_\calC$ in Theorem \ref{T: two} is the minimal skeleton of $C^{an}$, since each proper subgraph of $G$ has homotopy type different than the homotopy type of $C^{an}$. The methods of \cite{CDMY} allow us to consider more general graphs $G$ that may have infinite rays. Our proof applies to this situation, giving us the same data as in Theorem \ref{T: two} together with an additional subset  $V'\subseteq V$ of marked points such that $G=\Sigma_{\calC,V'}$.

\subsection{} Our proof of Theorem \ref{T: one} generalizes the methods developed in \cite{NishinouSiebert06} and extended in \cite{Nis141}.
We now outline the steps of the proof: Let $\Gamma\subset N_\R$ be a non-superabundant, smooth and 3-colorable tropical curve with rational edge lengths. Choose a finite extension $K=\C((t^{{1}/{\ell}}))$ of $\C((t))$ such that all vertices of $\Gamma$ have coordinates in the value group of $K$. Define $\Delta$ to be the fan in $N_\R\times\R_{\geq 0}$ obtained by putting a copy of $\Gamma$ in the height one part $N_\R\times\{1\}$ of $N_\R\times\R_{\geq 0}$ and taking cones over all edges and vertices of $\Gamma$. The fan $\Delta$ defines a toric scheme $\frakX=\frakX_\Delta$ over $R=\C[[t^{\frac{1}{\ell}}]]$.

\begin{enumerate}[label=(\Roman*)]
 \item\label{proof_constructC0} In Section \ref{section_specialfiber} we define a suitable nodal curve $C_0$ in the special fiber $\frakX_s$ of $\frakX$, whose dual graph is the skeleton of $\Gamma$.  
\item\label{proof_liftC0} The special fiber $\frakX_s$ is log smooth over the standard log point $O_0=(\Spec\C,\N)$; and the curve $C_0$, endowed with the log structure induced from $\frakX_s$, is log smooth over $O_0$. These observations allow us to apply log smooth deformation theory in Section \ref{section_logdef} and Section \ref{section_abundancy} in order to show that we can lift the nodal curve $C_0$ to a proper, flat semistable curve $\calC\subseteq \frakX$ over $R$ with special fiber $C_0$. The generic fiber $C$ of $\calC$ is a smooth complete curve in the generic fiber $X$ of $\frakX$, which is a $T$-toric variety over $K$. We set $V=C\cap (X-T)$.
\item Finally, in Section \ref{section_proofs}, we verify that this construction satisfies the properties asserted in Theorem \ref{T: one}: Lemma \ref{lemma_trop=Gamma} shows that in this case $\Trop(C\cap T)=\Gamma$, and the results of \cite{BPR11}*{Section 6} allow us to show that $\trop\vert_
{\Sigma_{\calC,V}}:\Sigma_{\calC,V}\rightarrow \Trop(C)$ is totally faithful.
\end{enumerate}

The crucial technical insight of this paper is contained in Section \ref{section_abundancy}, where we give a combinatorial interpretation to the homomorphism of cohomology groups on $C_0$ controlling the logarithmic deformation theory used in Step \ref{proof_liftC0} of our proof. In particular, we deduce that if $\Gamma$ is non-superabundant, this homomorphism is surjective and the deformations are unobstructed.

Smoothness and 3-colorability of the tropical curve $\Gamma$ are only used in Step \ref{proof_constructC0} of our proof. The proof would work equally well with any other condition on $\Gamma$ that allows us to construct a suitable nodal curve $C_0$ in $\frakX_s$. The classical condition of trivalency used extensively in the literature implies 3-colorability; in particular, Theorem 5.1.7 holds for all smooth non-superabundant trivalent tropical curves. %Under those assumptions, realizations of tropical curves are produced using log deformations in \cite{NishinouSiebert06} and in \cite{Nishinou14a}. Our approach closely follows the arguments from that work, as presented in \cite{NishinouSiebert06} and in \cite{Gross_Kansaslectures}*{Chapter 3}.

\subsection{} The question of realizability of tropical curves has initially been studied by Mikhalkin \cite{Mikhalkin05} in order to count algebraic curves in toric surfaces. Nishinou and Siebert \cite{NishinouSiebert06} generalize his results to loopless (hence non-superabundant) trivalent tropical curves in higher dimension, and Tyomkin \cite{Tyomkin12} further generalizes this to non-superabundant trivalent tropical curves of higher genus. See \cite{Gross_Kansaslectures}*{Chapter 4} for an expository account of the methods of \cite{NishinouSiebert06}.
Special conditions ensuring the realizability of some superabundant tropical curves are studied by Speyer in \cite{SpeyerThesis}, and by Katz in \cite{Katz12lifting}. 
In the upcoming \cite{Nis141} Nishinou extends the approach of \cite{NishinouSiebert06} to more general trivalent tropical curves (see also \cite{Nishinou15}).
Another approach to the problem in the trivalent case has recently been developed by Lang in \cite{Lang15}.
Our Theorem~\ref{T: one} gives realizations for a wide class of tropical curves, including tropical curves of higher valence and higher genus.
%along similar lines (also see \cite{Nishinou09}*{Section 3}). His goal is to prove a generalized correspondence theorem in \cite{Nis142}. The major difference between this article and \cite{Nis141} is that we are also dealing with the case that $\Gamma$ is a not necessarily trivalent, while \cite{Nis141} always assumes the trivalency of $\Gamma$. 

\subsection*{Acknowledgements} This paper arose as a group project at the AMS 2013 Mathematics Research Communities workshop on Tropical and Non-Archimedean Geometry in Snowbird, Utah. The authors would like to thank the American Mathematical Society, and in particular Ellen Maycock, for their support of this program, as well as the organizers Matt Baker and Sam Payne. Particular thanks are due to Sam Payne for his advice and encouragement, and for suggesting us to work on this project in the first place.
We thank Dustin Cartwright, Andrew Dudzik, Madhusudan Manjunath, and Yuan Yao for sharing early versions of \cite{CDMY}; we are particularly grateful to Dustin Cartwright for giving us the key insight on how to adjust their result to our needs in Theorem \ref{thm: combinatorics} and for several remarks on an early version of this article. We also thank Takeo Nishinou for sharing a preliminary version of his upcoming paper \cite{Nis141}.
Parts of this paper have been written while enjoying the hospitality of Yale University, for which particular thanks are due to Sam Payne. The first author would also like to thank the University of Cambridge for their hospitality. The fourth author would like to thank the University of Leuven, and in particular Johannes Nicaise, for their hospitality. Finally, we thank Dan Abramovich, Mark Gross, Sam Payne, Bernd Siebert, and the three anonymous referees for remarks on an earlier version of this paper. We also profited from discussions with Eric Katz, Elisa Postinghel, and Dhruv Ranganathan.

%%%%%%%%%%%%%%%%%%%%%%%%%%%%%%%%%%%%%%%%%%%%%%%%%%%%%%

\section{Construction of the special fiber}\label{section_specialfiber}

We begin by recalling the definitions of a tropical curve and stating the combinatorial conditions our tropical curves will have to satisfy.

\begin{defn}
\label{D: tropicalcurve}
Let $\Gamma$ be the (non-compact) weighted graph obtained by removing all vertices of valence one from a finite connected weighted graph.
We say that $\Gamma$ is a \emph{tropical curve} in $N_\R$ if it is endowed with a closed embedding $\Gamma\subset N_\R$ such that every vertex of $\Gamma$ is contained in $N_\Q$, every edge of $\Gamma$ is contained in a line of of rational slope of $N_{\R}$ and the \emph{balancing condition} is satisfied: for every vertex $v$ of $\Gamma$, if $e_1, \dots e_m$ are the edges of $\Gamma$ adjacent to $v$, $w_1, \dots w_m \in \N $ are their weights and $\vec{e}_1, \ldots, \vec{e}_m \in N$ are the primitive integer vectors from $v$ in the direction of the edges $e_j$, then we have $\sum_{j=1}^m w_j\vec{e}_j = 0$.
\end{defn}

Moreover, a tropical curve $\Gamma\subset N_\R$ can be endowed with the structure of metric graph by using the lattice length on $N_\R$ with respect to $N$.
That is, the length of an edge between the two vertices $v_1$ and $v_2$ is the largest positive real number $l$ such that $v_2-v_1=lv$ for some $v\in N$, and the unbounded edges of $\Gamma$ are then precisely those of infinite length.
We then call \emph{skeleton} of $\Gamma$ the finite metric graph obtained by erasing from $\Gamma$ all unbounded edges.

The following definition is the one-dimensional version of \cite{MikhalkinZharkov13}*{Definition 1.14}. For a planar tropical curve it also coincides with \cite{Mikhalkin05}*{Definition 2.18}.

\begin{defn}\label{definition_smooth_tropical_curve}
A tropical curve $\Gamma\subset N_\R$ is said to be \emph{smooth} if all its weights are equal to 1 and, for every vertex $v$ of $\Gamma$, there exists a basis $\{b_1, \ldots,b_n\}$ of $N$ and a positive integer $1\leq d\leq n$ such that the edges of $\Gamma$ adjacent to $x$ are in the directions of $\big\{b_1, \ldots, b_{d}, -\sum_{i=1}^{d}b_i\big\}$.
\end{defn}
%That is, smooth tropical curves are precisely the unions of finitely many segments and rays in $N_\R$ departing from points in $N_\Q$ which satisfy the condition in Definition \ref{definition_smooth_tropical_curve}. 

We conclude our list of definitions with a mild combinatorial condition.

\begin{defn}\label{definition_tame_tropical_curve}
A tropical curve $\Gamma\subset N_\R$ is said to be \emph{3-colorable} if there exists an ordering $\{v_1,v_2,\ldots\}$ of the vertices of $\Gamma$ such that for every $i$, the vertex $v_i$ is adjacent to less than three of the vertices $\{v_j\}_{j<i}$.
\end{defn}

\begin{rmk}
A tropical curve $\Gamma$ is 3-colorable if and only if its coloring number (in the sense of \cite{ErdosHajnal66}) is at most three.
It is simple to show that $\Gamma$ is 3-colorable if and only if it is 2-degenerate in the sense of \cite{LickWhite70}, i.e. if and only if every subgraph $G$ of $\Gamma$ contains a vertex that is adjacent to at most two other vertices of $G$.
\end{rmk}

\begin{ex}
%A tropical curve which contains a copy of the complete graph on four vertices is not 3-colorable.
Every trivalent tropical curve is 3-colorable. 
Indeed, let $\Gamma$ be a trivalent tropical curve and let $m$ be the number of vertices of $\Gamma$.
The balancing condition implies that $\Gamma$ has at least one unbounded edge $e$; 
%Let $A$ be the minimal affine subspace of $N_\R$ containing $\Gamma$.
%Since $\Gamma$ is trivalent, $A$ is not a point.
%Now, the balancing condition implies that every affine half-plane $H^+$ of $A$ intersects $\Gamma$; if $\Gamma$ did not contain an unbounded edge $e$, this would not be true.
let $v_m$ be the vertex adjacent to $e$.
Then $v_m$ has bounded valence at most two, and after removing it and and all the adjacent edges we can find one new vertex $v_{m-1}$ of valence at most two. 
By repeating this argument until we have selected all the vertices of $\Gamma$ we obtain an ordering which satisfies the condition of Definition~ \ref{definition_tame_tropical_curve}, therefore $\Gamma$ is 3-colorable.
Observe that also any tropical curve of genus at most three is 3-colorable, since the smallest graph whose vertices have all bounded valence of three or more is a tetrahedron.
\end{ex}

Let $\Gamma$ be a tropical curve in $N_\R$. 
Let $K=\C((t^{1/\ell}))$ be a finite extension of $\C((t))$ such that all the vertices of the underlying graph of $\Gamma$ lie in $N_{v(K^{\times})}=N \otimes_{\Z}{v(K^{\times})}$, where $v(K^{\times})=\Z[\frac 1 {\ell}]$ is the value group of $K$, and let $R=\C\lbrack\lbrack t^{1/\ell}\rbrack\rbrack$ be the valuation ring of $K$. 
Define a set of cones in $N_\R\times\R_{\geq 0}$ by putting a copy of $\Gamma$ into $N_\R\times\{1\}$ and taking cones over all edges and vertices of $\Gamma$.
More precisely, let $\Delta$ be the collection of the cones
\begin{equation*}
c(F)=\overline{\R_{>0}(F\times\{1\})}\subseteq N_\R\times\R_{\geq 0}
\end{equation*}
and
\begin{equation*}
c(F)\cap(N_\R\times\{0\})\subseteq N_\R\times\R_{\geq 0},	
\end{equation*}
where $F$ is either an edge or a vertex of $\Gamma$.
Then $\Delta$ is a fan. 
Indeed, as in the proof of \cite{BurgosGilSombra12}*{Theorem 3.4} the only non-trivial part is to show that the set $\Delta_0=\Delta\cap (N_\R\times\{0\})$ of the recession cones of $\Gamma$ is a fan (see \cite{BurgosGilSombra12}*{Examples 3.1 and 3.9(i)}). 
Since $\Gamma$ is one-dimensional, the recession cones of $\Gamma$ are either rays starting at the origin or the origin itself. Two such rays are either equal or their intersection is the origin and this observation suffices to show that the recession cones form a fan.
Moreover, the fan $\Delta$ is $v(K^{\times})$-admissible in the sense of \cite{Gubler13}*{Definition 7.5}. 
We set $\frakX$ to be the toric scheme over $R$ defined by $\Delta$. 
\begin{rem} 
We could compactify $\frakX$ by completing the cone $\Delta$, but prefer not to do so, since we only want to keep the toric strata that are relevant to our construction.
\end{rem}

Using the correspondence of \cite{Gubler13}*{7.9} we can give an explicit description of the toric scheme $\frakX$. Its generic fiber $\frakX_\eta$ is the toric variety over $K$ associated to the fan $\Delta_0$.
Its special fiber $\frakX_s$ is reduced by \cite{Gubler13}*{Lemma 7.10}, since the valuation $v$ is discrete and the vertices of $\Gamma$ are in $N_{v(K^{\times})}$. The irreducible components of $\frakX_s$ are toric varieties over the residue field $\C$, and they correspond bijectively to the vertices of $\Gamma$. Whenever two vertices $v$ and $w$ are connected by an edge $e$, the two components $X_v$ and $X_w$ are glued along the boundary divisor corresponding to $e$.

If $\Gamma$ is smooth and $v$ is a vertex of $\Gamma$ with $\val_\Gamma(v)=d+1$, from the explicit description of $\Gamma$ around $v$ of Definition \ref{definition_smooth_tropical_curve}, we deduce that the corresponding component $X_v$ of $\frakX_s$ is isomorphic to $P_d\times \mathbb G_m^{n-d}$, where $P_d:=\mathbb P^d\setminus\{\mbox{orbits of $T$ of codimension 2 or higher}\}$, and the boundary divisors of $P_d$ are all isomorphic to $\mathbb G_m^{d-1}$. 

\begin{ex}\label{example_hexagonalthing}
Consider the admissible cone $\Delta\subseteq \R^2\times\R_{\geq 0}$ obtained as described above by placing at height 1 the following tropical curve $\Gamma\subseteq \R^2$ :
\vspace{1ex}
\begin{center}\begin{tikzpicture}
\foreach \a in {-1,0,1,2,3}
 \foreach \b in {-1,0,1,2,3}
  \fill (\a,\b) circle (0.05 cm);

\draw (0,0) -- (1,0);
%\node at (-.65,0.15) {(-1,-1)};
\draw (0,0) -- (0,1);
%\node at (1.55,-0.2) {(0,-1)};
\draw (0,1) -- (1,2);
%\node at (2.4,0.7) {(1,0)};
\draw (1,2) -- (2,2);
%\node at (2.5,1.8) {(1,1)};
\draw (1,0) -- (2,1);
\draw (2,1) -- (2,2);
%\node at (.5,2.15) {(0,1)};
\draw (-1,-1) -- (0,0);
%\node at (-.5,1.3) {(-1,0)};
\draw (-1,1) -- (0,1);
\draw[dashed] (-1,1) -- (-1.5,1);
\draw (1,-1) -- (1,0);
\draw[dashed] (1,-1) -- (1,-1.5);
\draw (1,2) -- (1,3);
\draw[dashed] (1,3) -- (1,3.5);
\draw (2,1) -- (3,1);
\draw[dashed] (3,1) -- (3.5,1);
\draw (2,2) -- (3,3);
\draw[dashed] (3,3) -- (3.5,3.5);
\draw[dashed] (-1,-1) -- (-1.5,-1.5);

\node at (3.5,2.7) {$\Gamma$};
\end{tikzpicture}\end{center}

We obtain a toric scheme $\frakX$ over $\C[[t]]$ whose generic fiber $X$ is the toric variety associated to the recession fan $\Delta_0=\Delta\cap( \R^2\times\{0\})$ of $\Gamma$, which is the following fan in $\mathbb{R}^2$:
\vspace{1ex}
\begin{center}\begin{tikzpicture}
\foreach \a in {-1,0,1}
 \foreach \b in {-1,0,1}
  \fill (\a,\b) circle (0.05 cm);

\draw (0,0) -- (1,0);
\draw[dashed] (1,0) -- (1.5,0);
\draw (0,0) -- (0,1);
\draw[dashed] (0,1) -- (0,1.5);
\draw (0,0) -- (-1,0);
\draw[dashed] (-1,0) -- (-1.5,0);
\draw (0,0) -- (0,-1);
\draw[dashed] (0,-1) -- (0,-1.5);
\draw (0,0) -- (-1,-1);
\draw[dashed] (-1,-1) -- (-1.5,-1.5);
\draw (0,0) -- (1,1);
\draw[dashed] (1,1) -- (1.5,1.5);

\node at (1.5,0.7) {$\Delta_0$};

\end{tikzpicture}\end{center}
Therefore $X$ is the toric surface obtained by blowing up $\PP^2_{\C((t))}$ in three points, with the six closed points fixed by the torus removed. The special fiber $\frakX_s$ of $\frakX$ consists of six copies of $\PP^2_\C$ without their closed torus invariant points, glued over the one-dimensional toric strata as indicated by their moment polytopes below:
\vspace{1ex}
\begin{center}\begin{tikzpicture}

\foreach \a in {90,180,270,360}
\draw (0,0) -- (\a:1.5);
\draw (1.5,0) -- (0,1.5);
\draw (0,1.5) -- (-1.5,1.5);
\draw (-1.5,1.5) -- (-1.5,0);
\draw (-1.5,0) -- (0,-1.5);
\draw (0,-1.5) -- (1.5,-1.5);
\draw (1.5,-1.5) -- (1.5,0);
\draw (0,0) -- (-1.5,1.5)
(0,0) -- (1.5,-1.5);

\foreach \a in {(0,0), (1.5,0), (0,1.5), (-1.5,1.5), (-1.5,0), (0,-1.5), (1.5,-1.5)}
\fill[white] \a circle (0.05 cm);
\foreach \a in {(0,0), (1.5,0), (0,1.5), (-1.5,1.5), (-1.5,0), (0,-1.5), (1.5,-1.5)}
\draw \a+(0.05,0) arc (0:360:0.05);

\node at (1.7,1) {$\frakX_s$};
\end{tikzpicture}\end{center}
\end{ex}

%In order to construct a suitable curve in the special fiber of $\frakX$, we need to study more carefully what does it mean for a tropical curve to be 2-degenerate.
%The next lemma provides a condition which is equivalent to 2-degeneracy but easier to work with in practice.
%
%\begin{lem}\label{lemma:tame_equivalent}
%Let $\Gamma\subset N_\R$ be a tropical curve. Then $\Gamma$ is 3-colorable if and only if there exists an ordering $\{v_1,v_2,\ldots\}$ of the vertices of $\Gamma$ such that for every $i$ the vertex $v_i$ is adjacent to less than three of the vertices $\{v_j\}_{j<i}$.
%\end{lem}
%
%\begin{proof}
%If $\Gamma$ is 3-colorable, we inductively construct an ordering of its vertices by choosing at the $i$-th steph a vertex $v_i$ which has bounded valence (i.e. the number of bounded edges adjacent to it) at most two in the subgraph of $\Gamma$ induced by the set of vertices which have not been picked yet.
%Then the ordering we obtain by reversing the one we just defined satisfies the requirement.
%
%Conversely, assume by contradiction that $\{v_i\}_i$ is an ordering of the vertices of $\Gamma$ as in the statement but $\Gamma$ is not 3-colorable, and let $G$ be a nonempty subgraph of $\Gamma$ such that all the vertices of $G$ have bounded valence at least three.
%Let ${i_0}$ be the biggest index such that $v_{i_0}\in G$. 
%But then $v_{i_0}$ has bounded valence at least three in $G$ and all other vertices of $G$ are smaller than $v_{i_0}$, contradicting our hypothesis on the ordering $\{v_i\}_i$.
%\end{proof}

The following proposition is the main result of this section.

\begin{prop}\label{prop_constructspecialfiber}
Suppose that $\Gamma$ is a smooth and 3-colorable tropical curve. Then there is a complete and connected curve $C_0\subseteq\frakX_s$ fulfilling the following properties:
\begin{enumerate}
\item For each vertex $v \in \Gamma$, let $C_v := C_0 \cap X_v \subseteq \frakX_s$; then $C_v \isom \PP^1$.
\item Each $C_v$ intersects every toric boundary stratum of $X_v$ transversally.
\item If $v$ and $w$ are two vertices of $\Gamma$ connected by an edge, then the components $C_v$ and $C_w$ intersect in a node. 
\end{enumerate}
\end{prop}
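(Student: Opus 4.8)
The strategy is to build $C_0$ vertex by vertex, exploiting the 3-colorability ordering of the vertices of $\Gamma$ together with the explicit description of each component $X_v \cong P_d \times \mathbb{G}_m^{n-d}$ of $\frakX_s$ and of its toric boundary divisors (each isomorphic to $\mathbb{G}_m^{d-1}$, hence irreducible). The point is that specifying $C_v \cong \mathbb{P}^1 \subseteq X_v$ meeting the boundary strata transversally at prescribed points amounts to choosing a rational curve of the appropriate class through a controlled number of points, and 3-colorability guarantees that, as we introduce the vertices one at a time, at most two of the intersection points on $X_{v_i}$ have already been pinned down by the components $C_{v_j}$, $j<i$, constructed so far.

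\textbf{Step 1: local model on a single component.} First I would fix a vertex $v$ with $\val_\Gamma(v) = d+1$ and make explicit, using Definition~\ref{definition_smooth_tropical_curve} and the description $X_v \cong P_d \times \mathbb{G}_m^{n-d}$ given above, what it means to choose $C_v \cong \mathbb{P}^1$ meeting each of the $d+1$ toric boundary divisors of $X_v$ transversally in one point. Concretely, projecting to the $P_d = \mathbb{P}^d \setminus \{\text{codim} \geq 2 \text{ strata}\}$ factor, one wants a line in $\mathbb{P}^d$ (degree-one rational curve) meeting the $d+1$ coordinate hyperplanes transversally — equivalently, a line not contained in, and not passing through intersections of, coordinate hyperplanes — and projecting to the $\mathbb{G}_m^{n-d}$ factor, a suitable monomial map so that the image lies in $X_v$ and the curve is rational of the right class. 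I would record that such $C_v$ exists with the freedom to prescribe its intersection point on each given boundary divisor, and that prescribing the point on $k \leq d$ of the boundary divisors still leaves the curve to be chosen in a nonempty family (a line in $\mathbb{P}^d$ through $k \leq d$ points in general position on distinct coordinate hyperplanes); transversality and avoidance of the deleted strata are open dense conditions.

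\textbf{Step 2: induction along the 3-coloring.} Now order the vertices $v_1, v_2, \ldots, v_m$ as in Definition~\ref{definition_tame_tropical_curve}, so that each $v_i$ is adjacent to fewer than three earlier vertices. I would construct $C_{v_1}, C_{v_2}, \ldots$ inductively: having built $C_{v_1}, \ldots, C_{v_{i-1}}$, look at $X_{v_i}$; for each edge $e$ joining $v_i$ to some earlier $v_j$, the component $C_{v_j}$ already meets the boundary divisor $D_e$ of $X_{v_j}$, hence fixes the point of $D_e \cong \mathbb{G}_m^{\val_\Gamma(v_j)-2}$ at which $C_{v_i}$ must also meet this divisor (the two components are glued along $D_e$). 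By 3-colorability there are at most two such constraints, so by Step~1 (with $k \leq 2 \leq d$ — using $d \geq 2$ whenever there are two constraints, which holds since a vertex with two already-placed neighbours plus the incoming edge has $\val \geq 3$, i.e. $d \geq 2$) we may choose $C_{v_i} \cong \mathbb{P}^1 \subseteq X_{v_i}$ transverse to all boundary strata and meeting each of the $\leq 2$ prescribed divisors at the forced point; on the remaining boundary divisors of $X_{v_i}$ the intersection point is free and we fix it arbitrarily in the dense transversal locus. For each such intersection with a divisor $D_{e'}$, $e'$ an edge to a not-yet-treated vertex, this choice records the point through which the future component must pass. Gluing $C_{v_i}$ to the earlier components along the shared nodes yields, after all $m$ steps, a curve $C_0 = \bigcup_v C_v \subseteq \frakX_s$ with the nodal crossings prescribed by the edges of $\Gamma$ — in particular, satisfying (i), (ii), (iii).

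\textbf{Step 3: connectedness and completeness.} Finally I would check $C_0$ is complete and connected. Completeness: each $C_v \cong \mathbb{P}^1$ is proper, and $C_0$ is a finite union of them, so $C_0$ is proper over $\C$; one must also confirm that $C_0$ does not run off into the deleted torus-invariant strata of $\frakX_s$, which is exactly the transversality/avoidance ensured in Steps~1–2 (the $C_v$ meet only the codimension-one boundary, and at smooth points thereof). Connectedness: $\Gamma$ (hence its skeleton, the dual graph of $C_0$) is connected by the definition of tropical curve, and $C_v \cap C_w \neq \emptyset$ exactly when $v,w$ are joined by an edge, so the dual graph of $C_0$ is the connected graph $\Gamma$; therefore $C_0$ is connected.

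\textbf{Main obstacle.} The delicate point is Step~1–2, namely verifying that the "degree-one rational curve through $\leq 2$ prescribed boundary points, transverse to all $d+1$ boundary divisors and avoiding the deleted higher-codimension strata" genuinely exists inside $X_v = P_d \times \mathbb{G}_m^{n-d}$ for every configuration that arises — and, crucially, that the single *incoming* gluing point (the edge connecting $v_i$ to the already-built part via its own defining edge in the inductive order) together with the $\leq 2$ neighbour constraints never exceeds what a line in $\mathbb{P}^d$ can accommodate. This forces a careful bookkeeping of which boundary divisors of $X_{v_i}$ are already constrained: one must distinguish the edge along which $v_i$ is attached to $\{v_j\}_{j<i}$ in the coloring order from the (at most two) *additional* back-edges counted by Definition~\ref{definition_tame_tropical_curve}, and confirm the total number of pinned points is $\leq d$ in all cases, which is where the smoothness hypothesis (giving the clean $\mathbb{P}^d$-local model and $d = \val_\Gamma(v) - 1$) is essential.
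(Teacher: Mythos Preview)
Your approach is the same as the paper's: order the vertices by 3-colorability and inductively choose each $C_{v_i}$ as a line in the $P_d$-factor of $X_{v_i}\cong P_d\times\mathbb{G}_m^{n-d}$, constrained to pass through the points already fixed by the components $C_{v_j}$ with $j<i$. The paper simply takes $C_{v_i}\subset P_d\times\{1\}$, so your ``suitable monomial map'' to the $\mathbb{G}_m^{n-d}$-factor in Step~1 is unnecessary: any morphism $\mathbb{P}^1\to\mathbb{G}_m^{n-d}$ is constant anyway.

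The one genuine confusion is in your ``main obstacle'' paragraph and in the parenthetical of Step~2. You distinguish ``the edge along which $v_i$ is attached to $\{v_j\}_{j<i}$ in the coloring order'' from the ``(at most two) \emph{additional} back-edges counted by Definition~\ref{definition_tame_tropical_curve}''. There is no such distinguished incoming edge: Definition~\ref{definition_tame_tropical_curve} counts \emph{all} edges from $v_i$ to earlier vertices, and there are fewer than three of them, i.e.\ at most two in total. So the number of prescribed intersection points is at most two, full stop, and a line in $\mathbb{P}^d$ through two given points exists for every $d\geq 1$. In particular your side claim ``$d\geq 2$ whenever there are two constraints'' is neither needed nor correctly argued (if $v_i$ has valence two with both neighbours earlier, then $d=1$ and the required curve is all of $\mathbb{P}^1\times\{\mathrm{pt}\}$, which trivially meets the two boundary points). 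Once you drop this phantom bookkeeping problem, your Steps~1--3 are exactly the paper's proof.
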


\begin{proof}
Let $\{v_i\}_i$ be an ordering of the vertices of $\Gamma$ as in Definition~\ref{definition_tame_tropical_curve}.
For every $i$, we inductively define a smooth rational curve $C_{v_i}$ in $P_{d}\times\{1\}\subseteq X_{v_i} \isom P_{d}\times \mathbb G_m^{n-d}$ subject to the following condition: if $v_j$ is a vertex of $\Gamma$ adjacent to ${v_i}$, for $j<i$, the two curves $C_{v_i}$ and $C_{v_j}$ intersect in a point of $X_{v_i}\cap X_{v_j}$. Such a curve $C_{v_i}$ exists since the condition that we are imposing is the passage through at most two given points.
Moreover, we can choose the curves $C_{v_i}$ meeting transversally each boundary stratum of $P_d\times\{1\}$, since a generic line in $\PP^d$ intersects any coordinate hyperplane transversally away from the strata of codimension two or higher.
Finally, let $C_0$ be the union of the curves $C_{v_i}$.
\end{proof}

\begin{ex}
Consider the tropical curve $\Gamma$ and the associated toric scheme $\frakX$ as in Example \ref{example_hexagonalthing}. Then the curve $C_0$ in $\frakX_s$ constructed in Proposition \ref{prop_constructspecialfiber} is a loop consisting of six copies of $\PP^1_{\C}$, and it can be visualized using tropical lines in the moment polytopes as follows: 
\vspace{1ex}
\begin{center}\begin{tikzpicture}
%\foreach \a in {(0,0), (1,0), (0,1), (-1,1), (-1,0), (0,-1), (1,-1)}
  %\fill \a circle (0.05 cm);

\foreach \a in {90,180,270,360}
\draw[dashed] (0,0) -- (\a:2)
 (2,0) -- (0,2)
 (0,2) -- (-2,2)
 (-2,2) -- (-2,0)
 (-2,0) -- (0,-2)
 (0,-2) -- (2,-2)
 (2,-2) -- (2,0)
 (0,0) -- (-2,2)
 (0,0) -- (2,-2);
 
 \draw (-0.3,0.7) -- (-0.3,2)
  (-0.3,0.7) -- (0,0.7)
  (-0.3,0.7) -- (-0.7,0.3);
  
  \draw (-0.7,0.3) -- (-2,0.3)
  (-0.7,0.3) -- (-0.7,0);
  
  \draw (0.7,0.7) -- (1,1)
  (0.7,0.7) -- (0,0.7)
  (0.7,0.7) -- (0.7,0);
  
   \draw (0.3,-0.7) -- (0.3,-2)
  (0.3,-0.7) -- (0,-0.7)
  (0.3,-0.7) -- (0.7,-0.3);

 \draw (-0.7,-0.7) -- (-1,-1)
  (-0.7,-0.7) -- (0,-0.7)
  (-0.7,-0.7) -- (-0.7,0);
  
 \draw (0.7,-0.3) -- (2,-0.3)
  (0.7,-0.3) -- (0.7,0);
  
\node at (2.5,1.5) {$C_0\subseteq \frakX_s$};
\end{tikzpicture}\end{center}
\end{ex}

\begin{rem}
Note that the curve $C_0$ constructed in Proposition \ref{prop_constructspecialfiber} is a nodal curve, and its dual graph is equal to the graph underlying the skeleton of $\Gamma$. We remind the reader that the data of an $n$-dimensional toric scheme over $R$ is essentially equivalent to the notion of a \emph{toric degeneration}, a toric morphism from a complex toric variety of dimension $n+1$ to $\mathbb{A}^1_\C$ as in \cite{NishinouSiebert06}*{Section 3}, and the embedding $C_0\subseteq \frakX_s$ is a pre-log curve in the sense of \cite{NishinouSiebert06}*{Definition 4.3}.
\end{rem}

%%%%%%%%%%%%%%%%%%%%%%%%%%%%%%%%%%%%%%%%%%%%%%%%%%%%%%%%%%%%%%%%%%%%%%%%%%%%%%%%%%%%%%%%%%%%%%%%%%%%%%%%%%%%

\section{Log smooth deformation theory}\label{section_logdef}

In this section, we explain the conditions under which we can use log smooth deformation theory to lift the nodal curve $C_0$ to a semistable curve $\calC$ over $R$ in $\frakX$. Our approach is a generalization of the methods developed in \cite{NishinouSiebert06}.

We use logarithmic geometry in the sense of \cite{Kato89}, a theoretical framework that makes it possible to treat certain singularities, such as toric or normal crossings singularities, as if they were smooth. 
For the basics of this theory, see \cite{Kato89} and \cite{Gross_Kansaslectures}*{Chapter 3}. %Our approach is a generalization of \cite{NishinouSiebert06}*{Lemma 7.2 and Proposition 7.3}.

In our setting, we endow the scheme $O=\Spec R$ with the divisorial log structure defined by its special fiber. Its generic fiber is then $\Spec K$ with the trivial log structure, while its special fiber is the standard log point $O_0 := (\Spec\C,\C^{\times} \times \N)$.
We endow a toric scheme $\frakX$ with the divisorial log structure defined by its toric boundary; then $\frakX$ is log smooth over $O$. 
If $Y\to X$ is a morphism of log schemes, we denote by $\Theta_{Y/X}$ the \defi{log tangent sheaf} of $Y$ over $X$. 
By \cite{Oda88}*{Proposition 3.1}, there is then a natural isomorphism $\Theta_{\frakX/O}\isom \calO_{\frakX}\otimes_{\Z} N$.

%Consider $\Sigma\subseteq N_\R\times\R_{\geq 0}$ a $\frac{1}{N}\mathbb{N}$-admissible fan in the sense of \cite{Gubler13} and consider the toric scheme $\mathfrak{X}=\mathfrak{X}_\Sigma$ over $R$. This toric scheme $\mathfrak{X}$, endowed with the canonical logarithmic structure, is log smooth over $\Spec \mathbb{C}$. Moreover, we have an isomorphism $\Theta_{\frakX/O}\isom \calO_{\frakX}\otimes N$ for $\frakX$ by an argument analogous to the one in \cite{Gross_Kansaslectures}*{3.32}.
Finally, we endow the nodal curve $C_0\subseteq \frakX_s$ with the log structure inherited from the log structure of $\frakX_s$ (see \cite{Kato89}*{Example 1.5(3)}). 
Then $C_0$ is log smooth over the standard log point $O_0$. 
Note that the log structure of $C_0$ not only encodes information about the nodal points, but also marks the points of the intersection of $C_0$ with those toric boundary divisors of $X_0$ which lie in only one component of $X_0$. 
In the situation of Section \ref{section_specialfiber}, this means that $C_0$ has one marked point for each unbounded edge of $\Gamma$.

We now develop the log smooth deformation theory we need for our proof.
Let $R_k = \C\lbrack\lbrack t^\frac{1}{\ell}\rbrack\rbrack/(t^\frac{k+1}{\ell})$ and endow $O_k=\Spec R_k$ with the log structure induced by $\N\to R_k:a\mapsto t^\frac{a}{\ell}$.
Note that we have natural closed immersions $O_{k'}\hookrightarrow O_k$ for $0\leq k'\leq k$.

\begin{defn}
Let $C_0$ be a log smooth curve over $O_0$. 
A \defi{$k$-th order deformation} of $C_0$ is a log smooth morphism $C_k\rightarrow O_k$ whose base change to $O_0$ is $C_0\rightarrow O_0$.
\end{defn}

Suppose we are given a $(k-1)$-st order deformation $C_{k-1}\rightarrow O_{k-1}$ of $C_0\rightarrow O_0$. 
By \cite{Gross_Kansaslectures}*{Proposition 3.40} there is an element $\ob(C_{k-1}/O_{k-1}) \in \HH^2\big(C_0,\Theta_{C_0/O_0}\big)$ such that $C_{k-1}\rightarrow O_{k-1}$ lifts to a $k$-th order deformation $C_k\rightarrow O_k$ if and only if $\ob(C_{k-1}/O_{k-1})=0$. 
Since $C_0$ is of dimension one, we have $\HH^2\big(C_0,\Theta_{C_0/O_0}\big)=0$ and therefore such a lift always exists. 
Moreover, the set of such lifts is a torsor over $\HH^1\big(C_0,\Theta_{C_0/O_0}\big)$. 
However, lifting a log smooth curve together with its embedding into $\frakX$ is more complicated.

Let $f=f_0:C_0\hookrightarrow\frakX_s$ be a strict closed immersion of $C_0$ into the special fiber $\frakX_s$ of a toric scheme $\frakX$, and consider the commutative diagram
\begin{equation*}\label{equation_specialfiber}\begin{CD}
C_0@>f_0>> \frakX\\
@VVV @VVV\\
O_0@>>> O 
\end{CD}\end{equation*}

\begin{defn} 
A \defi{$k$-th lift} of $f_0$ is a commutative diagram
\begin{equation*}\label{equation_kthlift}\begin{CD}
 C_k@>f_k>> \frakX\\
 @VVV @VVV \\
 O_k@>>> O
\end{CD}\end{equation*}
where $f_k:C_k\rightarrow \frakX$ is strict and $C_k\rightarrow O_k$ is a $k$-th lift of $C_0\rightarrow O_0$. In this case Nakayama's lemma guarantees that $f_k$ is a closed immersion. 
\end{defn}

The following proposition is the main result of this section; it expands on the argument in \cite{Gross_Kansaslectures}*{Theorem 3.41}. We refer the reader to \cite{NishinouSiebert06}*{Lemma 7.2 and Proposition 7.3} for the original results.

\begin{prop}
\label{P: existence deformation}
Let $f_{k-1}$ be a $(k-1)$-st lift of $f_0$ and suppose that the canonical homomorphism
\begin{equation}\label{equation_logtanemb}
\HH^1(C_0,\Theta_{C_0/O_0})\longrightarrow \HH^1(C_0,f_0^\ast\Theta_{\frakX/O})
\end{equation}
is surjective. Then there exists a $k$-th lift of $f_0$ that extends $f_{k-1}$. 
\end{prop}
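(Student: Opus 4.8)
The plan is to run the standard obstruction-theory argument for lifting a strict closed immersion of log smooth schemes, in the spirit of \cite{NishinouSiebert06}*{Proposition 7.3} and \cite{Gross_Kansaslectures}*{Theorem 3.41}, and to isolate precisely where surjectivity of \eqref{equation_logtanemb} enters. First I would recall that since $C_0$ is a curve, $\HH^2(C_0,\Theta_{C_0/O_0})=0$, so the $(k-1)$-st order deformation $C_{k-1}\to O_{k-1}$ always lifts to some $k$-th order deformation $C_k\to O_k$; the real issue is to lift it \emph{compatibly with the embedding} $f_{k-1}$ into $\frakX$. Fix an affine open cover $\{U_i\}$ of $C_0$ together with an open cover of $\frakX$ refining it. On each $U_i$, log smoothness of $\frakX$ over $O$ and of $C_k$ over $O_k$ together with strictness of $f_{k-1}$ let us construct, by the infinitesimal lifting criterion for log smooth morphisms, a local strict lift $f_k^{(i)}\colon C_k|_{U_i}\to\frakX$ extending $f_{k-1}|_{U_i}$ (using that $O_{k-1}\hookrightarrow O_k$ is a square-zero thickening with ideal $(t^{k/\ell})$).

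Next I would compare the local lifts on overlaps $U_{ij}$. The difference $f_k^{(i)}-f_k^{(j)}$, measured against the ideal $(t^{k/\ell})\cong\C$ of the thickening, is a section of $f_0^*\Theta_{\frakX/O}$ over $U_{ij}$ (using the identification $\Theta_{\frakX/O}\cong\calO_{\frakX}\otimes_\Z N$), and these differences form a \v Cech $1$-cocycle, hence a class $\delta\in\HH^1(C_0,f_0^*\Theta_{\frakX/O})$. This class is the obstruction to gluing the local lifts into a global $k$-th lift of $f_0$. Independently, the same construction applied to the abstract deformation problem: the local lifts $C_k^{(i)}$ of $C_{k-1}$ (forgetting the embedding) can be glued — since $\HH^2$ vanishes they glue after modifying by a \v Cech $1$-cochain — and the key point is that one may choose the global abstract deformation $C_k\to O_k$ so that its gluing cochain, pushed into $\HH^1(C_0,f_0^*\Theta_{\frakX/O})$ via the map induced by $\Theta_{C_0/O_0}\to f_0^*\Theta_{\frakX/O}$, hits exactly the obstruction class $\delta$. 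Here is where surjectivity of \eqref{equation_logtanemb} is used: it guarantees that $\delta$ lies in the image of $\HH^1(C_0,\Theta_{C_0/O_0})$, so we can adjust the abstract lift $C_k$ (which is only well-defined up to a torsor over $\HH^1(C_0,\Theta_{C_0/O_0})$) to kill $\delta$, at which point the local embeddings glue to a global strict closed immersion $f_k\colon C_k\to\frakX$ over $O_k$ extending $f_{k-1}$. Nakayama's lemma then ensures $f_k$ is a closed immersion, as noted in the definition of a $k$-th lift.

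The main obstacle, and the step deserving the most care, is the compatibility/bookkeeping between the two torsor structures: the set of $k$-th order abstract deformations $C_k$ of $C_{k-1}$ is a torsor under $\HH^1(C_0,\Theta_{C_0/O_0})$, the set of local embedding lifts (for fixed $C_k$) is controlled by $f_0^*\Theta_{\frakX/O}$, and one must check that changing $C_k$ by a class $\xi\in\HH^1(C_0,\Theta_{C_0/O_0})$ changes the embedding obstruction $\delta$ by exactly the image of $\xi$ under \eqref{equation_logtanemb}. This is a diagram-chase with the exact triangle relating $\Theta_{C_0/O_0}$, $f_0^*\Theta_{\frakX/O}$, and the normal sheaf, together with the cotangent-complex formalism for log smooth morphisms; once this compatibility is established, surjectivity of the map on $\HH^1$ immediately yields the desired lift. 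I would either carry this out \v Cech-theoretically on an explicit cover or, more cleanly, phrase it via the functoriality of the log cotangent complex and the long exact sequences of $\Ext$/$\HH^*$ groups attached to $f_0$, which makes the matching of torsor actions automatic.
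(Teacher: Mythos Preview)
Your proposal is correct and follows essentially the same approach as the paper: both arguments choose an affine cover, construct local lifts $f_i^k$ using log smoothness of $\frakX/O$, identify the gluing obstruction as a \v Cech class in $\HH^1(C_0,f_0^\ast\Theta_{\frakX/O})$, and then use surjectivity of \eqref{equation_logtanemb} to modify the abstract deformation $C_k$ (via its $\HH^1(C_0,\Theta_{C_0/O_0})$-torsor structure) so that the local embeddings glue. The paper carries out the final step by an explicit \v Cech computation with the gluing maps $\tilde\theta_{ij}^k=\theta_{ij}^k+t^{k/\ell}\phi_{ij}$, which is precisely the concrete incarnation of the torsor compatibility you flag as the key point.
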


In \cite{NishinouSiebert06} the authors assume that $C_0$ is rational. In this case $\HH^1(C_0,f_0^\ast\Theta_{\frakX/O})=0$ and the homomorphism \eqref{equation_logtanemb} is always surjective. 

\begin{proof}[Proof of Proposition \ref{P: existence deformation}]
Assume that we are given a $(k-1)$-st lift $f_{k-1}$ and let $C_k\rightarrow O_{k}$ be a lift of $C_{k-1}\rightarrow O_{k-1}$.

Suppose that \eqref{equation_logtanemb} is surjective. 
Choose an affine open cover $(U_i)$ of $C_0$ and let $U_{ij} = U_i \cap U_j$. Since the $U_i$ are affine, log smooth thickenings exist and are unique by \cite{Gross_Kansaslectures}*{Proposition 3.38}. Let $U_i^k$ and $U_i^{k-1}$ be log smooth thickenings of $U_i$ over $O_k$ and $O_{k-1}$ respectively.  In particular, we assume that $U_i^k$ is a lifting of $U_i^{k-1}$ to a log smooth scheme over $O_k$.
%Note that by \cite{Gross_Kansaslectures}*{Proposition 3.38}, there are unique thickenings $U_i^k$ and $U_i^{k-1}$ (up to isomorphism) that are log smooth over $O_k$ and $O_{k-1}$ respectively. 
We have gluing morphisms 
\begin{equation*}
\theta_{ij}^k:U_{ij}^k\longrightarrow U_{ji}^k
\end{equation*}
and 
\begin{equation*}
\theta_{ij}^{k-1}:U_{ij}^{k-1}\longrightarrow U_{ji}^{k-1}
\end{equation*}
that fulfill $\theta_{ij}^{k-1} = \theta_{ij}^k|_{U_{ij}^{k-1}}$ for all $i$ and $j$.
Moreover, we have lifts $f_i^{k-1}:U_i^{k-1}\rightarrow \frakX$ that satisfy the compatibility condition $f_i^{k-1}=f_j^{k-1}\circ\theta_{ij}^{k-1}$ on $U_{ij}^{k-1}$.
 
Since $\frakX$ is log smooth over $O$, we can find lifts $f_i^k:U_i^k\rightarrow\frakX$ of the $f_i^{k-1}$ to the thickening $U_i^k$ of $U_i^{k-1}$ for every $i$. 
By \cite{Kato89}*{Proposition 3.9} the set of such lifts $f^k_i$ on $U_i^k$ forms a torsor over $\HH^0\big(U_i,f^\ast\Theta_{\frakX/O}\vert_{U_i}\big)$.

Now compare the two liftings $f_i^k$ and $f_j^k\circ \theta_{ij}^{k-1}$ on $U_{ij}^k$. 
Note that they both lift $f^{k-1}_i=f^{k-1}_j\circ\theta^{k-1}_{ij}$ on $U^{k-1}_{ij}$ and therefore differ by a section $\psi_{ij}\in\HH^0(U_{ij},f^\ast\Theta_{\frakX/O}\vert_{U_{ij}})$, i.e. we have
\begin{equation*}
f_i^k=f_j^k\circ\theta_{ij}^k + t^\frac{k}{N}\psi_{ij} \ .
\end{equation*}
The $\psi_{ij}$ define a $2$-cocycle of $f^\ast\Theta_{\frakX/O}$ on $C_0$, since $\HH^2(f^\ast\Theta_{\frakX/O})=0$. 

Since \eqref{equation_logtanemb} is surjective, there is a $2$-cocycle $\phi_{ij}^k$ for $\Theta_{C_0/O_0}$ such that $f\circ\phi_{ij}=\psi_{ij}$ for all $i$ and $j$. 
We can now replace the lift $C_k$ of $C_{k-1}$ by the lift $\tilde{C}_k$ of $C_{k-1}$ that is given by the gluing maps $\tilde{\theta}^k_{ij}=\theta^k_{ij} +t^\frac{k}{N}\phi_{ij}$. 
But then we have 
\begin{equation*}\begin{split}
f_i^k&=f_j^k\circ\theta_{ij}^k + t^\frac{k}{N}\psi_{ij}\\
&=f_j^k\circ(\tilde{\theta}_{ij}^k -t^\frac{k}{N}\phi_{ij}) + t^\frac{k}{N}\psi_{ij}\\
&=f_j^k\circ\tilde{\theta}_{ij}^k - t^\frac{k}{N}\psi_{ij} + t^\frac{k}{N}\psi_{ij} = f_j^k\circ\tilde{\theta}_{ij}^k
\end{split}\end{equation*}
and therefore we can glue the local lifts $f_i^k$ to a global lift $f_k:\tilde{C}_k\rightarrow\frakX$.
\end{proof}

In our setting, we can deduce the following result:

\begin{prop}\label{prop_constructmodel}
Let $\Gamma$, $\frakX$, and $C_0$ be as in Section \ref{section_specialfiber}, and assume that the homomorphism \eqref{equation_logtanemb} is surjective. Then there exists a semistable curve $\calC$ over $R$ with smooth generic fiber $C$ and special fiber $C_0$, together with a closed immersion $\calC \hookrightarrow \frakX$ extending $C_0\hookrightarrow\frakX$.
\end{prop}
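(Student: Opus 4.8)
The plan is to iterate Proposition~\ref{P: existence deformation} to build a compatible tower of embedded lifts of $C_0$, and then to algebraize that tower. The key point making the iteration possible is that the hypothesis of Proposition~\ref{P: existence deformation} --- surjectivity of the homomorphism~\eqref{equation_logtanemb} --- involves only $C_0$ and $f_0$, not the order of the lift, so once it is granted (as it is here) it remains available at every stage. First I would regard $f_0\colon C_0\hookrightarrow\frakX_s$ as a $0$-th lift and construct inductively, for each $k\ge 1$, a $k$-th lift $f_k\colon C_k\to\frakX$ over $O_k\to O$ extending $f_{k-1}$: the abstract deformation $C_{k-1}\to O_{k-1}$ always lifts to some $C_k\to O_k$ because $\HH^2(C_0,\Theta_{C_0/O_0})=0$ (as $C_0$ is one-dimensional), and Proposition~\ref{P: existence deformation} then upgrades this to an embedded lift, with $f_k$ a strict closed immersion by Nakayama's lemma.

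Next I would algebraize the tower. The inverse system $(C_k)_k$ assembles into a formal closed subscheme $\widehat{\calC}$ of the formal completion $\widehat{\frakX}$ of $\frakX$ along its special fiber, and $\widehat{\calC}$ is proper over $\operatorname{Spf} R$ since $C_0$ is complete. As $\frakX$ need not be proper over $R$, I would first enlarge $\Delta$ to a complete $v(K^\times)$-admissible fan, obtaining a proper toric scheme $\overline{\frakX}\supseteq\frakX$ over $R$; then each $C_k$ is closed in $\overline{\frakX}\times_O O_k$ as well, and Grothendieck's formal existence theorem, applied to the proper $R$-scheme $\overline{\frakX}$, produces a closed immersion $\calC\hookrightarrow\overline{\frakX}$ with $\calC$ proper over $R$ and $\calC\times_R R_k\isom C_k$ compatibly in $k$. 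Because $R$ is a complete discrete valuation ring and $\calC\times_R\C=C_0\subseteq\frakX_s$, the image in $\Spec R$ of the closed subset $\calC\setminus\frakX$ is a closed subset of $\Spec R$ missing the closed point, hence empty; so $\calC\setminus\frakX=\emptyset$ and we obtain the desired closed immersion $\calC\hookrightarrow\frakX$ extending $C_0\hookrightarrow\frakX$.

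It then remains to check the asserted properties of $\calC$. For flatness of $\calC\to\Spec R$: each $C_k\to O_k$ is flat, being a log smooth and integral morphism (see~\cite{Kato89}), so $\widehat{\calC}$ is flat over $\operatorname{Spf} R$; since flatness over the complete Noetherian local ring $R$ of a finitely generated module can be tested after $(t^{1/\ell})$-adic completion, $\calC$ is flat over $R$. The special fiber of $\calC$ is $\calC\times_R\C=C_0$, which is connected and nodal by Proposition~\ref{prop_constructspecialfiber}, so $\calC$ is a semistable $R$-model. Finally, the log structures on the $C_k$ are compatible and algebraize to a log structure on $\calC$ for which $\calC\to O$ is log smooth; as the generic fiber of $O$ carries the trivial log structure, the generic fiber $C=\calC\times_R K$ is a smooth complete curve over $K$, its log structure marking only the points where $C$ meets the toric boundary. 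Concretely, at each node of $C_0$ log smoothness forces $\calC$ to be \'etale locally isomorphic to $\Spec R[x,y]/(xy-t^{c/\ell})$ for some integer $c\ge 1$, which has smooth generic fiber, while away from the nodes $C_0$ is smooth and hence so is $\calC$ over $R$.

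I expect the main obstacle to be not the formal lifting --- that is taken care of by Proposition~\ref{P: existence deformation} under the standing surjectivity hypothesis --- but the algebraization step together with the bookkeeping around it: invoking formal existence in the non-proper scheme $\frakX$ by passing through a compactification $\overline{\frakX}$ and then showing the algebraized curve lies back inside $\frakX$, verifying flatness of $\calC$ after completion, and running the \'etale-local analysis at the nodes in order to conclude that the generic fiber of $\calC$ is smooth.
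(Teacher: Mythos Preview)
Your proposal is correct and follows essentially the same route as the paper: iterate Proposition~\ref{P: existence deformation} to obtain a compatible tower of embedded lifts, algebraize via Grothendieck's existence theorem, and verify log smoothness of $\calC$ over $O$ by identifying $\calC$ \'etale-locally at a node with $\Spec R[x,y]/(xy-t^{e/\ell})$. The only noteworthy difference is in the algebraization step: the paper invokes \cite{EGA3.1}*{Th\'eor\`eme 5.1.4} directly, using properness of the formal scheme $\frakC$ over $O$, to produce $\calC\subseteq\frakX$; you instead pass through a proper toric compactification $\overline{\frakX}$, apply existence there, and then argue that $\calC\setminus\frakX$ has closed image in $\Spec R$ missing the closed point, hence is empty. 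Your detour is a legitimate and slightly more explicit way to justify the same conclusion, and your additional bookkeeping (flatness via log smoothness and integrality, and the remark on algebraizing the log structure) is fine but not strictly necessary once one knows $\calC$ is \'etale-locally $\calC_e$.
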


\begin{proof}
Taking the direct limit of all $C_k$, we obtain a formal scheme $\frakC$ over $O$, with a closed immersion into the $t$-adic formal completion $\widehat{\frakX}$ of $\frakX$. Since $C_0$ is complete, all $C_k$ are proper over $O_k$ and therefore $\frakC$ is proper over $O$. By Grothendieck's existence theorem \cite{EGA3.1}*{Th\'eor\`eme 5.1.4}, $\frakC$ is then the $t$-adic formal completion of a closed subscheme $\calC$ of $\frakX$, proper over $O$.
Note that by construction the special fiber of $\calC$ is equal to $C_0$. 

We want to show that with the log structure induced from $\frakX$ the $R$-scheme $\calC$ is log smooth over $O$. 
Since this can be checked in an \'etale neighborhood in $\calC$ of a node $p$ of $C_0$, without loss of generality we can assume that $C_0=\Spec\big(\C[x,y]/(xy)\big)$.
For $e>0$, set $\calC_e=\Spec\big(R[x,y]/(xy-t^{{e}/{\ell}})\big)$ with log smooth logarithmic structure as in \cite{Gross_Kansaslectures}*{Example 3.26}.
By the description of log smooth curves of \cite{Kato_logsmoothcurves}*{Proposition 1.1}, there exists some $e>0$ such that the special fiber of $\calC_e$ is $C_0$. Therefore, for every $k>0$ the restriction $\calC_e\times_{O}O_k$ is the unique log smooth lifting of $C_0\to O_0$ to $O_k$. This implies that $\calC=\calC_e$, so $\calC$ is log smooth over $O$.

Then the generic fiber $C$ of $\calC$ is log smooth over $K$. Therefore $C$ has only toric singularities, hence it is smooth, since it is one-dimensional. Since $\calC$ is proper over $O$, the curve $C$ is complete.
\end{proof}

\begin{rem}\label{remark_markedpoints}
Assume we are in the situation of Proposition \ref{prop_constructmodel}. Then the log structure of $\calC$, which is the one induced by the log structure of $\frakX$, contains information not only about the nodes of $C_0$ but also about finitely many sections of $\calC\to O$, which are disjoint since $\calC$ is log smooth. In the special fiber, these sections cut out precisely the marked points of $C_0$, which correspond to the unbounded edges of $\Gamma$. On the other hand, in the generic fiber they cut out a finite set $V\subseteq C(K^{\textup{alg}})$ of marked points of $C$, which is precisely the intersection of $C$ with the toric boundary of $X$.
%\lorenzo{Do we control the deformation theory well enough to get $V\subseteq C(K)$?} 
Therefore, this construction naturally gives rise to a generalized skeleton $\Sigma_{\calC,V}$ of $C^{an}$.
\end{rem}

%%%%%%%%%%%%%%%%%%%%%%%%%%%%%%%%%%%%%%%%%%%%%%%%%%%%%%%%%%%%%%%%%%%%%%%%%%%%%%%%%%%%%%%%%%%%%%%%%%%%%%%%%%%%

\section{The abundancy map in cohomology}\label{section_abundancy}

Let $\Gamma\subseteq N_\R$ be a tropical curve with skeleton $G$, and denote by $E_G$ the set of  edges of $G$. 

Choose a direction for every edge $e \in E_G$ and write $\vec{e}$ for the vector in $N_{\R}$ connecting the two endpoints of $e$ according to this direction.

We denote by $\HH_1(\Gamma)$ the first simplicial homology group of $\Gamma$. 
An element of $\HH_1(\Gamma)$ is a formal sum $\sum_{e\in E_G} a_e [e]$, with integer coefficients, forming a cycle in $\Gamma$. 
The next definition is due to Mikhalkin \cite{Mikhalkin05}*{Section 2.6}, but our formulation is essentially the one of \cite{Katz12lifting}*{Section 1}.

\begin{defn}
\label{D: abundancy}
A tropical curve $\Gamma$ is said to be \emph{non-superabundant}, if the \defi{abundancy map} 
\begin{align*}
\Phi_\Gamma\colon \R^{E_G} &\longrightarrow \Hom\big(\HH_1(\Gamma), N_\R\big)\\
(\ell_e) &\longmapsto \left(\sum_{e \in E_G} a_e [e] \mapsto \sum_{e\in E_G} \ell_e a_e \vec{e} \right),
\end{align*}
is surjective.
\end{defn}

We remark that the homomorphism $\Phi_\Gamma\big((l_e)_e\big)$ does not depend on the choice of a direction of the edges of $\Gamma$, as edge directions are specified in both the source and the target of $\Hom\big(\HH_1(\Gamma), N_\R\big)$.

The intersection of the kernel of $\Phi_\Gamma$ with $\R^{E_G}_{>0}$ can be seen as the moduli space of metric graphs embedded in $N_\R$ which have the same combinatorial type as the skeleton of $\Gamma$, modulo translations. 
Then for $\Gamma$ to be non-superabundant means that this moduli space has the expected dimension $\#E_G-b_1(\Gamma)n$, where $b_1(\Gamma)$ is the first Betti number of $\Gamma$ and $n$ is the rank of $N$.
See \cite{Mikhalkin05}*{Sections 2.4--2.6} for a thorough discussion of those dimension counts. 
For example, whenever $\Gamma$ is trivalent, by \cite{Mikhalkin05}*{Proposition 2.13} the expected dimension of the moduli space of metric graphs in $N_\R$ which have the same combinatorial type as $\Gamma$ is $x+(n-3)(1-b_1(\Gamma))$, where $x$ is the number of unbounded edges of $\Gamma$.

Now let $\Gamma$ be a tropical curve, let $\frakX$ be the toric scheme as constructed in Section \ref{section_specialfiber}, and let $C_0$ be a nodal curve in $\frakX_s$ fulfilling the conclusion of Proposition \ref{prop_constructspecialfiber}. 
The crucial result of this section is the following proposition, which gives a cohomological interpretation of the abundancy map.

\begin{prop}\label{prop_cohsurj}
There are a homomorphism $\delta:\C^{E_G}\longrightarrow\HH^1(C_0,\Theta_{C_0/O_0})$, and an isomorphism
$\HH^1(C_0,f^\ast\Theta_{\frakX/O})\isom\Hom\big(\HH_1(\Gamma),N_\C\big)$
such that the induced homomorphism 
\begin{equation*}
\C^{E_G}\stackrel{\delta}{\longrightarrow}\HH^1(C_0,\Theta_{C_0/O_0})\longrightarrow \HH^1(C_0,f^\ast\Theta_{\frakX/O})\isom\Hom\big(\HH_1(\Gamma),N_\C\big)
\end{equation*}
is equal to $\Phi_\Gamma\otimes\C$. 
\end{prop}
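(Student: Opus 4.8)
The strategy is to compute both cohomology groups via an explicit Čech cover adapted to the combinatorics of $C_0$, identify the connecting maps, and match the result with $\Phi_\Gamma$. First I would fix the cover: for each vertex $v$ of $\Gamma$, the component $C_v\cong\mathbb P^1$ is affine after removing the finitely many marked/node points, so take the $U_i$ to be small affine neighborhoods; the only interesting intersections are étale neighborhoods of the nodes, one for each edge $e\in E_G$. This immediately explains the index set $E_G$ on the source $\mathbb C^{E_G}$.

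\smallskip

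\noindent\textbf{Step 1: the sheaf $\Theta_{C_0/O_0}$ near a node and the map $\delta$.} Near a node of $C_0$ the log tangent sheaf $\Theta_{C_0/O_0}$ is the free rank-one sheaf generated by the log vector field $x\partial_x = -y\partial_y$ (in coordinates $xy=0$); away from the nodes and marked points it is the ordinary tangent sheaf of $\mathbb P^1$. I would compute $\HH^1(C_0,\Theta_{C_0/O_0})$ with the above cover: since each $\HH^1$ of a component (which is $\mathbb P^1$ with enough points removed, hence affine, or $\mathbb P^1$ itself with $\Theta_{\mathbb P^1}(-\text{marked pts})$) either vanishes or contributes in a controlled way, the Mayer–Vietoris/Čech computation shows that $\HH^1(C_0,\Theta_{C_0/O_0})$ receives a contribution of $\mathbb C$ from each node, i.e. from each edge $e\in E_G$. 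The map $\delta\colon\mathbb C^{E_G}\to\HH^1(C_0,\Theta_{C_0/O_0})$ is then the obvious one sending the basis vector $[e]$ to the Čech $1$-cochain supported at the node corresponding to $e$, with value the generating log vector field (this is exactly the "smoothing parameter" of that node, matching the edge length $\ell_e$ in the deformation theory of Section~\ref{section_logdef}). I would record that $\delta$ is surjective — this is where transversality of $C_v$ with the toric boundary (Proposition~\ref{prop_constructspecialfiber}(2)) is used, as it kills the boundary contributions.

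\smallskip

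\noindent\textbf{Step 2: the sheaf $f^\ast\Theta_{\frakX/O}$ and the target isomorphism.} By Oda's computation recalled in Section~\ref{section_logdef}, $\Theta_{\frakX/O}\cong\calO_{\frakX}\otimes_{\Z}N$, so $f^\ast\Theta_{\frakX/O}\cong\calO_{C_0}\otimes_{\Z}N$ is the trivial rank-$n$ vector bundle tensored with $N$. Then $\HH^1(C_0,\calO_{C_0})\otimes N \cong \HH^1(C_0,\calO_{C_0})\otimes_{\mathbb C}N_{\mathbb C}$, and since $C_0$ has dual graph $G$ with all components rational, $\HH^1(C_0,\calO_{C_0})\cong \HH^1(G,\mathbb C)\cong\Hom(\HH_1(\Gamma),\mathbb C)$ canonically (the standard identification of $\HH^1$ of a nodal curve of compact type... no — here possibly with cycles — via the combinatorial part of the Jacobian). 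Tensoring with $N_{\mathbb C}$ gives the desired isomorphism $\HH^1(C_0,f^\ast\Theta_{\frakX/O})\cong\Hom(\HH_1(\Gamma),N_{\mathbb C})$. Again I'd compute this with the same Čech cover: a class is represented by $1$-cochains on the nodes, and the cocycle condition together with vanishing on each rational component forces the value on the nodes along a cycle to sum to zero — precisely the statement that it defines an element of $\Hom(\HH_1(\Gamma),N_{\mathbb C})$.

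\smallskip

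\noindent\textbf{Step 3: commutativity — the main obstacle.} The heart of the argument is to check that the composite $\mathbb C^{E_G}\xrightarrow{\delta}\HH^1(C_0,\Theta_{C_0/O_0})\to\HH^1(C_0,f^\ast\Theta_{\frakX/O})\cong\Hom(\HH_1(\Gamma),N_{\mathbb C})$ agrees with $\Phi_\Gamma\otimes\mathbb C$, i.e. sends $(\ell_e)$ to $\big(\sum a_e[e]\mapsto\sum\ell_e a_e\vec e\big)$. In the Čech model both maps are now $1$-cochains on the set of nodes, so this reduces to a local computation at each node: the natural map $\Theta_{C_0/O_0}\to f^\ast\Theta_{\frakX/O}$ sends the generating log vector field $x\partial_x$ at the node corresponding to edge $e$ to the primitive vector $\vec e\in N$ (up to the chosen orientation), because the node lies on the toric boundary divisor of $\frakX_s$ dual to $e$, and the toric coordinate transverse to that divisor is the monomial with exponent vector $\vec e$. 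Tracking the pushforward of $\delta([e])$ through this local identification and then through the isomorphism of Step~2 — which records, for each cycle $\sum a_e[e]$, the sum $\sum a_e\cdot(\text{value at node }e)$ — yields exactly $\sum \ell_e a_e\vec e$. I expect the bookkeeping of signs/orientations (the independence of $\Phi_\Gamma$ on edge directions noted after Definition~\ref{D: abundancy} is the sanity check) and the precise normalization of the log structure at each node to be the fussy part; the conceptual content is the identification "log smoothing parameter of the node $e$" $\leftrightarrow$ "edge length $\ell_e$" and "transverse toric monomial at node $e$" $\leftrightarrow$ "primitive direction $\vec e$".
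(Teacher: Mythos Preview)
Your overall strategy is sound and in fact very close to the paper's: both arguments reduce to (i) identifying $\HH^1(C_0,\calO_{C_0})$ with $\Hom(\HH_1(\Gamma),\C)$ via the combinatorics of the dual graph, and (ii) a local computation at each node showing that the natural map $\Theta_{C_0/O_0}\to f^\ast\Theta_{\frakX/O}\cong\calO_{C_0}\otimes N$ sends the generating log vector field to the primitive edge direction $\vec e$. The difference is in packaging. You work directly with a \v{C}ech cover; the paper instead uses the two normalization short exact sequences
\[
0\to\Theta_{C_0/O_0}\to\prod_v(\Theta_{C_0/O_0})\big|_{C_v}\to\prod_e\underline{\C}_{p(e)}\to 0,
\qquad
0\to\calO_{C_0}\otimes N\to\prod_v\calO_{C_v}\otimes N\to\prod_e\underline{N_\C}_{p(e)}\to 0,
\]
and defines $\delta$ as the connecting homomorphism in the associated long exact sequence. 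This is a bit cleaner: it makes the index set $E_G$ appear canonically (as the set of nodes), and the identification $\HH^1(C_0,\calO_{C_0})\cong\Hom(\HH_1(\Gamma),\C)$ drops out immediately because the long exact sequence for $\calO_{C_0}$ is, term by term, $\Hom(-,\C)$ applied to the simplicial chain complex of $G$. Your \v{C}ech argument computes the same thing, but the bookkeeping (which opens, which intersections) is less canonical.

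One genuine error, though inessential to the proposition: your claim in Step~1 that $\delta$ is surjective is false in general. From the long exact sequence above, surjectivity of $\delta$ is equivalent to the vanishing of $\prod_v\HH^1\big(C_v,(\Theta_{C_0/O_0})|_{C_v}\big)$. On a component $C_v\cong\PP^1$ with $d+1$ marked points (nodes plus toric boundary intersections), the restricted log tangent sheaf is $\calO_{\PP^1}(1-d)$, which has nonzero $\HH^1$ as soon as $d\geq 3$. The paper explicitly notes (in the remark following the proposition) that $\delta$ is an isomorphism only in the trivalent case. Transversality with the toric boundary plays no role here. Fortunately the proposition does not require surjectivity of $\delta$; you should simply drop that claim.
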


%In particular, the canonical homomorphism 
%\begin{equation*}
%\HH^1(C_0,\Theta_{C_0/O_0})\longrightarrow \HH^1(C_0,f^\ast\Theta_{\frakX/O})
%\end{equation*}
%is surjective if the abundancy map is surjective, i.e. if $\Gamma$ is non-superabundant. 
The proof of this statement will require several steps.
We will begin by defining the homomorphism $\delta$.
In Lemma~\ref{lemma_simpcoh} we construct the isomorphism, and we conclude by explicitly computing the resulting composition.

\begin{proof}[Proof of Proposition \ref{prop_cohsurj}]
Note that there are compatible one-to-one correspondences between the vertices $v$ of $\Gamma$ and the components $C_v$ of $C_0$ as well as between the edges $e$ of $G$ and the corresponding nodes, denoted $p(e)$, of $C_0$. 
We have two normalization exact sequences (see \cite{Hartshorne77}*{Exercise IV.1.8}) of sheaves on $C_0$

\begin{equation}\label{sequences}
\xymatrix{
0 \ar[r] & \Theta_{C_0/O_0} \ar[r]\ar[d] & \prod_{v}(\Theta_{C_0/O_0})|_{C_v}\ar[r]\ar[d] &\prod_{e}\underline{\C}_{p(e)} \ar[r]\ar[d] & 0\\
0 \ar[r] & \calO_{C_{0}} \otimes N \ar[r] & \prod_{v}(\calO_{C_{0}/O_{0}} \otimes N)|_{C_v} \ar[r] & \prod_{e} \underline{N_\C}_{p(e)} \ar[r] & 0\\
}
\end{equation}
where $\underline{\C}_{p(e)}$ and $\underline{N_\C}_{p(e)}$ denote the skyscraper sheaves at $p(e)$. These products of the skyscraper sheaves are identified with the cokernel of the maps on the left of the horizontal exact sequences.

The first and the second vertical maps are given by composing the natural map $\Theta_{C_0/O_0}\rightarrow f^\ast\Theta_{\frakX/O}$ with the natural isomorphism $f^\ast\Theta_{\frakX/O}\isom\calO_{C_0}\otimes N$ of \cite{Oda88}*{Proposition 3.1}. 
These maps induce the third vertical map. 
Taking the long exact cohomology sequences of these two short exact sequences, we obtain the following commutative square:

\begin{equation}\label{sequences2}
\xymatrix{
\prod_{e}\C \ar[r]^-{\delta}\ar[d] & \HH^1( C_0, \Theta_{C_0/O_0}) \ar[d] \\
\prod_{e}N_\C \ar[r]^-{\delta} & \HH^1(C_0, \calO_{C_{0}} \otimes N) \\
}
\end{equation}

Now we need the following lemma:

\begin{lem}\label{lemma_simpcoh}
There is an isomorphism
\[
\alpha_0: \HH^1(C_0, \calO_{C_{0}}) \stackrel{\sim}{\longrightarrow} \Hom\big(\HH_1(\Gamma), \C).
\]
which induces the isomorphism
\[
\alpha: \HH^1(C_0, \calO_{C_{0}} \otimes N) \stackrel{\sim}{\longrightarrow} \Hom\big(\HH_1(\Gamma), N_\C\big).
\]
such that the composition $\alpha \circ \delta:\prod_{e}N_\C\rightarrow \Hom\big(\HH_1(\Gamma), N_\C\big)$ is given by sending a family $(u_e)_e$ of elements $u_e\in N_\C$ to the homomorphism 
\begin{equation*}
\sum_{e}a_e[e]\longmapsto \sum_{e}a_e u_e
\end{equation*}
in $\Hom\big(\HH_1(\Gamma),N_\C\big)$.
\end{lem}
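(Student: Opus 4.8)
The plan is to extract both isomorphisms from the long exact cohomology sequences of the normalization sequences in \eqref{sequences}, and then to read off the formula for $\alpha\circ\delta$ directly from the description of the connecting homomorphism at the level of cochains.

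First I would treat the structure sheaf $\calO_{C_0}$ alone. Recall from Proposition \ref{prop_constructspecialfiber} that $C_0$ is connected and nodal, that every component $C_v$ is isomorphic to $\PP^1$, and that the dual graph of $C_0$ is the graph $G$ underlying the skeleton of $\Gamma$. Since $\HH^0(\PP^1,\calO)=\C$ and $\HH^1(\PP^1,\calO)=0$, the long exact sequence of the normalization sequence $0\to\calO_{C_0}\to\prod_v\calO_{C_v}\to\prod_e\underline{\C}_{p(e)}\to 0$ collapses to the four-term exact sequence
\begin{equation*}
0\longrightarrow \C\longrightarrow \prod_v\C \stackrel{\partial}{\longrightarrow}\prod_e\C\stackrel{\delta}{\longrightarrow}\HH^1(C_0,\calO_{C_0})\longrightarrow 0 .
\end{equation*}
By construction of the normalization sequence, $\partial$ sends a family of constants $(s_v)_v$ to the family whose component at an edge $e$ directed from $v_1$ to $v_2$ is $s_{v_1}-s_{v_2}$; that is, $\partial$ is the simplicial coboundary of $G$ with respect to the orientation of edges fixed by the vectors $\vec e$. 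Hence $\delta$ induces an isomorphism from $\coker\partial$ onto $\HH^1(C_0,\calO_{C_0})$, and $\coker\partial$ is by definition the first simplicial cohomology $\HH^1(G;\C)$. I would then invoke the perfect pairing $\prod_e\C\times\big(\HH_1(G)\otimes_{\Z}\C\big)\to\C$, $\big((u_e)_e,\sum_e a_e[e]\big)\mapsto\sum_e a_e u_e$: since $\im\partial$ is, for dimension reasons, exactly the annihilator of the space of cycles, this pairing descends to a perfect pairing between $\coker\partial$ and $\HH_1(G)\otimes_{\Z}\C$, identifying $\HH^1(G;\C)$ with $\Hom\big(\HH_1(G),\C\big)$. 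Finally $\HH_1(G)=\HH_1(\Gamma)$, because $\Gamma$ deformation retracts onto its skeleton, the unbounded edges being contractible rays. Composing all of this defines the isomorphism $\alpha_0$.

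Next I would tensor the entire normalization sequence for $\calO_{C_0}$ with the finitely generated free abelian group $N$. Since $-\otimes_{\Z}N$ is exact and preserves the vanishing $\HH^1(\PP^1,\calO\otimes N)=0$, the same reasoning yields the four-term exact sequence
\begin{equation*}
0\longrightarrow N_\C\longrightarrow \prod_v N_\C \stackrel{\partial}{\longrightarrow}\prod_e N_\C\stackrel{\delta}{\longrightarrow}\HH^1(C_0,\calO_{C_0}\otimes N)\longrightarrow 0 ,
\end{equation*}
which is the long exact sequence of the bottom row of \eqref{sequences}, and whose last map $\delta$ is the bottom arrow of \eqref{sequences2}; here $\partial$ is the simplicial coboundary of $G$ with coefficients in $N_\C$. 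Exactly as before, $\delta$ identifies $\coker\partial=\HH^1(G;N_\C)$ with $\HH^1(C_0,\calO_{C_0}\otimes N)$, and the pairing with cycles identifies this with $\Hom\big(\HH_1(\Gamma),N_\C\big)$. Since every identification here is the one over $\C$ tensored with $N$ — using the canonical isomorphism $\Hom\big(\HH_1(\Gamma),\C\big)\otimes_{\Z}N\cong\Hom\big(\HH_1(\Gamma),N_\C\big)$, valid because $\HH_1(\Gamma)$ is free of finite rank — the resulting isomorphism $\alpha$ is indeed the one induced by $\alpha_0$. To conclude I would observe that under $\alpha$ the connecting map $\delta\colon\prod_e N_\C\to\HH^1(C_0,\calO_{C_0}\otimes N)$ becomes the canonical surjection $\prod_e N_\C\twoheadrightarrow\coker\partial$ followed by the duality identification with $\Hom\big(\HH_1(\Gamma),N_\C\big)$; tracing a family $(u_e)_e$ through these maps produces precisely the homomorphism $\sum_e a_e[e]\mapsto\sum_e a_e u_e$.

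The one genuinely delicate point is the identification of the boundary map $\partial$ and of the connecting homomorphism $\delta$ at the cochain level: namely, that the difference-of-branches map in the normalization sequence agrees with the simplicial coboundary, with the signs dictated by the chosen directions $\vec e$, and that $\delta$ really is the quotient-onto-cokernel map. I expect to settle this by a direct \v Cech computation of the long exact sequence using the cover of $C_0$ by the complements of the nodes. The remaining ingredients — the cohomology of $\PP^1$, exactness of $-\otimes_{\Z}N$, the homotopy equivalence $\Gamma\simeq G$, and the duality between simplicial cochains and cycles of a finite graph — are routine.
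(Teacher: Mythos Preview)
Your proposal is correct and follows essentially the same route as the paper: take the normalization sequence, pass to the long exact cohomology sequence using $\HH^1(\PP^1,\calO)=0$, identify the resulting four-term sequence with the simplicial cochain complex of $G$, apply universal coefficients (which you phrase via the explicit pairing with cycles), and then tensor with $N$. You are somewhat more explicit than the paper about the sign convention for $\partial$ and about why $\alpha\circ\delta$ becomes the pairing-with-cycles map, but the underlying argument is the same.
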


\begin{proof}
Consider the normalization short exact sequence
\[
0 \to \calO_{C_0} \to \prod_{v} \calO_{C_v} \to \prod_{e} \underline{\C}_{p(e)} \to 0.
\]
The associated long exact cohomology sequence is
\begin{equation}\label{exactsequence}
0 \to \HH^0(C_0, \calO_{C_0}) \to \prod_{v} \HH^0(C_v, \calO_{C_v}) \to \prod_{e} \C \to \HH^1(C_0, \calO_{C_0}) \to 0,
\end{equation}
since by the rationality of $C_v$ we have $\HH^1(C_v, \calO_{C_v}) = 0$.

The first three terms of the sequence \eqref{exactsequence} are $\Hom(\cdot,\C)$ of the reduced simplicial chain complex 
\[
\Z^{E_G} \rightarrow \Z^{V_G} \rightarrow \Z \rightarrow 0,
\]
which defines $\HH_1(\Gamma)$.
Therefore we obtain an isomorphism $\HH^1(C_0, \calO_{C_0})\isom\HH^1(\Gamma, \C)$, and the latter is isomorphic to $\Hom\big(\HH_1(\Gamma),\C\big)$ by the universal coefficient theorem for $\Gamma$. Since $N$ is a free abelian group, we may apply $-\otimes N$ to $\alpha_0$, which induces the isomorphism $\alpha$. In this case, the homomorphism $\prod_{e}N_\C\rightarrow \Hom\big(\HH_1(\Gamma), N_\C\big)$ is given by sending a family $(u_e)_e$ of elements $u_e\in N_\C$ to the homomorphism 
\begin{equation*}
\sum_{e}a_e[e]\longmapsto \sum_{e}a_e u_e
\end{equation*}
in $\Hom\big(\HH_1(\Gamma),N_\C\big)$.
\end{proof}

Let us now finish the proof of Proposition~\ref{prop_cohsurj}. 
By Lemma \ref{lemma_simpcoh}, it is enough to show that the homomorphism 
\begin{equation}\label{eq_prodprod}
\prod_{e}\C\longrightarrow  \prod_{e}N_\C 
\end{equation}
on the left of diagram \eqref{sequences2} is given by sending $(l_e)_e$ to the family $(l_e \vec{e})_e$ in $\prod_{e}N_\C$. 

Let $e$ be an edge of $\Gamma$. Let $v_1$ and $v_2$ be the two vectors in $N_\R\times\R_{\geq 0}$ pointing to the two ends of $e$, so that $v_2-v_1=\vec{e}$.
We complete $\{v_1,v_2\}$ to a $\Z^n\oplus \frac{1}{\ell}\Z$-integral basis $\{v_1,\ldots, v_{n+1}\}$ of $N_\R\times\R_{\geq 0}$, and we write $x_1,\ldots, x_{n+1}$ for the induced coordinates on the open affine torus-invariant subset $\mathfrak{U}$ corresponding to the cone in $\Delta$ containing $e$.
Then $\Theta_{\frakU/O}$ has generators $x_1\frac{\partial}{\partial x_1}, \ldots, x_{n+1}\frac{\partial}{\partial x_{n+1}}$ that fulfill the relation $T\frac{\partial}{\partial T}=0$, where $T$ denote the image in $\calO_{\frakU}$ of the coordinate $t^{\frac{1}{\ell}}$ on $O$. 
Since $f_0\mathrel{\mathop:}C_0\hookrightarrow \frakX$ is a strict closed immersion, the pullbacks $y_1=f_0^\ast x_1$ and $y_2=f_0^\ast x_2$ define coordinates on $\calO_{C_0}$ such that the formal completion of $\calO_{C_0,p(e)}$ is isomorphic to $\mathbb{C}[[y_1y_2]]/(y_1y_2)$. 
In these coordinates the stalk $(\Theta_{C_0/O_0})_{p(e)}$ is generated by the two elements $y_1\frac{\partial}{\partial y_1}$ and $y_2\frac{\partial}{\partial y_2}$, which fulfill the relation $y_1\frac{\partial}{\partial y_1} +y_2\frac{\partial}{\partial y_2}=0$.

Therefore around $p(e)$ the natural homomorphism 
\begin{equation*}
\Theta_{C_0/O_0}\longrightarrow f^\ast\Theta_{\frakX/O}
\end{equation*}
is given by the associations $y_1\frac{\partial}{\partial y_1}\mapsto x_1\frac{\partial}{\partial x_1}$ and $y_2\frac{\partial}{\partial y_2}\mapsto x_2\frac{\partial}{\partial x_2}$. 
Since $v_2-v_1=\vec{e}$ the map \eqref{eq_prodprod} sends $(0,\ldots,0,l_e,0,\ldots,0)$ to $(0,\ldots,0,l_e \vec{e},0,\ldots,0)$, which concludes the proof of Proposition~\ref{prop_cohsurj}.

\end{proof}

\begin{rem}
In particular, if $\Gamma$ is trivalent, then the homomorphism \eqref{equation_logtanemb} is precisely $\Phi_\Gamma\otimes\C$. 
Indeed, in this case, for every vertex $v$, we have $\Theta_{C_0/O_0}|_{C_v}\isom\calO_{\PP^1}(-1)$, so $\HH^i(C_0,\Theta_{C_0/O_0}|_{C_v})=0$ for $i=0,1$, and therefore the homomorphism $\delta$ in Proposition \ref{prop_cohsurj} is an isomorphism by the long exact sequence associated to the first line of the diagram \eqref{sequences}. This is the case considered in \cite{Nis141}. 
\end{rem}

%%%%%%%%%%%%%%%%%%%%%%%%%%%%%%%%%%%%%%%%%%%%%%%%%%%%%%%%%%%%%%%%%%%%%%%%%%%%%%%%%%%%%%%%%%%%%%%%%%%%%%%%%%%%

\section{Proofs of Theorems \ref{T: one} and \ref{T: two}}\label{section_proofs}

In this section we complete the proofs of Theorem~\ref{T: one} and Theorem~\ref{T: two}.
It is worthwhile to notice that 3-colorability and smoothness of the tropical curve $\Gamma\subseteq N_\R$ are only used to construct a suitable nodal curve $C_0\subseteq\frakX_s$ in Section~\ref{section_specialfiber}.
Whenever such a nodal curve exists, the non-superabundancy of $\Gamma$ is sufficient for the conclusion of Theorem~\ref{T: one} to hold. 

Let $\Gamma\subseteq N_\R$ be a tropical curve, let $\frakX$ be the toric $R$-scheme defined as in Section~\ref{section_specialfiber}, and denote by $X$ the generic fiber of $\frakX$.

\begin{lem}\label{lemma_trop=Gamma}
Let $\calC$ be a flat $R$-curve in $\frakX$ and denote by $C\subseteq X$ its generic fiber. 
If the special fiber $\mathcal{C}_s$ is proper over $\C$ and intersects every torus orbit contained in the special fiber of $\frakX$, then $\Trop(C\cap T)=\Gamma$.
\end{lem}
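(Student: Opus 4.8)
The plan is to use the relationship between the toric scheme $\frakX$, its recession fan $\Delta_0$, and the combinatorial structure of $\Gamma$ to identify $\Trop(C\cap T)$. First I would recall that $X = \frakX_\eta$ is the toric variety over $K$ associated to $\Delta_0$, and that the tropicalization $\trop\colon T^{\textup{an}}\to N_\R$ extends to a map sending the toric strata of $X$ to the cones of $\Delta_0$. The curve $C\cap T$ has tropicalization a one-dimensional balanced weighted polyhedral complex $\Trop(C\cap T)\subseteq N_\R$, and the key point is that $\Gamma$ and $\Trop(C\cap T)$ should have the same recession fan, since $C$ meets the toric boundary of $X$ in exactly the points prescribed by the unbounded rays of $\Gamma$.

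The main structural input is the theory of initial degenerations (or equivalently the fact that $\frakX$ is a $v(K^\times)$-admissible model): for a point $w$ in $N_\R$ lying in the relative interior of a cone $c(F)$ of $\Delta$ (with $F$ an edge or vertex of $\Gamma$), the fiber of the tropicalization map over $w$, together with the reduction map to the special fiber $\frakX_s$, relates $w$ being in $\Trop(C\cap T)$ to the corresponding stratum of $\frakX_s$ meeting $\mathcal{C}_s = C_0$. Concretely, a point $w\in N_\R$ lies in $\Trop(C\cap T)$ if and only if the stratum of $\frakX_s$ cut out by the cone of $\Delta$ whose relative interior contains $w$ intersects $\mathcal{C}_s$. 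So I would carry out the following steps: (i) show $\Trop(C\cap T)$ is contained in the support of $\Delta$ restricted to height one, i.e. in $\Gamma$ as a set — this follows because $\calC\subseteq\frakX$ and the tropicalization of the generic fiber is controlled by the support of the fan of the model; (ii) use the hypothesis that $\mathcal{C}_s$ meets every torus orbit contained in $\frakX_s$ to deduce that every cone of $\Delta$ lying in the interior (i.e. corresponding to a vertex or edge $F$ of $\Gamma$) contributes a point of $\Trop(C\cap T)$, giving the reverse inclusion $\Gamma\subseteq\Trop(C\cap T)$; (iii) check that the weights match, which since $\Gamma$ is smooth (all weights one) and $C_0$ meets the relevant strata transversally and in reduced points amounts to a local intersection-multiplicity-one computation, so that the two weighted polyhedral complexes coincide.

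The hard part will be step (ii) and the precise bookkeeping between torus orbits of $\frakX_s$ and cones of the fan $\Delta$: one must be careful that $\frakX$ is the \emph{non-complete} toric scheme (the remark after the construction of $\frakX$ emphasizes that only the relevant strata are kept), so the torus orbits of $\frakX_s$ are in bijection precisely with the edges and vertices of $\Gamma$ — there are no extra strata to worry about, but one should make sure the codimension-two-or-higher orbits have indeed been removed so that ``intersects every torus orbit contained in $\frakX_s$'' is exactly the statement needed. I would appeal to the explicit description of $\frakX$ via \cite{Gubler13}*{7.9} already recalled in Section~\ref{section_specialfiber}, together with the compatibility of $\trop$ with the initial degeneration / reduction map as in \cite{Gubler13} (or \cite{BPR11}), to pin down that $w\in\Trop(C\cap T)$ exactly when the corresponding stratum meets $\mathcal{C}_s$. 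Once that correspondence is in place, the lemma follows formally: the hypothesis says every such stratum meets $\mathcal{C}_s$, hence $\Trop(C\cap T)$ contains a point over the interior of every cone of $\Delta$ lying over $\Gamma$, and combined with the a priori containment $\Trop(C\cap T)\subseteq\Gamma$ and balancing we conclude equality as sets, and then as weighted polyhedral complexes by the transversality in Proposition~\ref{prop_constructspecialfiber}.
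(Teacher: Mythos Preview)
Your approach is essentially the paper's: the inclusion $\Trop(C\cap T)\subseteq\Gamma$ follows from properness of $\calC_s$ (the paper cites \cite{Gubler13}*{Proposition 11.12}), and the reverse inclusion from the hypothesis that $\calC_s$ meets every torus orbit in $\frakX_s$, which via Tevelev's Lemma \cite{Gubler13}*{Lemma 11.6} forces $\Trop(C\cap T)$ to meet the relative interior of every face of $\Gamma$; combined with the fact that $\Trop(C\cap T)$ is a one-dimensional polyhedral complex this yields equality.

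Two small points. Your opening discussion of recession fans is a detour and not needed. More importantly, your step (iii) on weights should be dropped: the lemma only asserts equality of underlying polyhedral sets, and in any case you cannot invoke Proposition~\ref{prop_constructspecialfiber} here since the lemma is stated for an arbitrary flat $R$-curve $\calC\subseteq\frakX$ satisfying the hypotheses, not only for the specific $C_0$ constructed there (neither smoothness of $\Gamma$ nor transversality of $\calC_s$ with the strata is assumed).
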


\begin{proof}
%\martin{The same proof should work if $\dim \Sigma_1=\dim C$.}
By \cite{Gubler13}*{Proposition 11.12} the properness of $\mathcal{C}_s$ implies that $\Trop(C)\subseteq \Gamma$, and therefore the vertices of $\Trop(C)$ are a subset of $\Gamma$. 
By Tevelev's Lemma \cite{Gubler13}*{Lemma 11.6} $\Trop(C)$ intersects the relative interior of every face in $\Gamma$. 
In particular, the vertices of $\Trop(C)$ coincide with the vertices of $\Gamma$. 
Again, since $\Trop(C)$ intersects the relative interior of every face of $\Gamma$, all one-dimensional faces of $\Gamma$ already have to be contained in $\Trop(C)$, and therefore $\Trop(C)=\Gamma$.
\end{proof}

In the proof of Theorem \ref{T: one} we use the \emph{initial degeneration} $\indeg_P(C)$ of $C$ along an open face $P$ of $\Trop(C)$. In our situation $\indeg_P(C)$ can be defined as the $\C$-scheme
\begin{equation*}
\indeg_P(C)=\big(\calC_s\cap \frakX_P\big)\times T_P \ ,
\end{equation*}
where $\frakX_P$ is the torus orbit in $\frakX_s$ corresponding to $P$, and $T_P$ is the subgroup of the reduction of the torus $\calT=\Spec R[M]$ that stabilizes $\frakX_P$. 
By \cite{HelmKatz12}*{Lemma 3.6} this coincides with the usual definition of the initial degeneration of $C$ at a point $p\in P$, as for example in \cite{BPR11}*{Section 2.1}, since $\calC$ is proper over $O=\Spec R$ and the multiplication map $\calT\times_O\calC\rightarrow\frakX$ is flat by the same argument as in \cite{Hacking08}*{Lemma 2.7} and surjective because $\calC$ meets each torus orbit of $\frakX$.

\begin{proof}[Proof of Theorem \ref{T: one}]
Let $\Gamma\subset N_\R$ be a non-superabundant, smooth and 3-colorable tropical curve with rational edge lengths, let $K$ and $\frakX$ be be defined as in Section \ref{section_specialfiber}, and $C_0$ be a curve in $\frakX_s$ as constructed in Proposition \ref{prop_constructspecialfiber}. 
Since $\Gamma$ is non-superabundant, Proposition \ref{prop_cohsurj} and Proposition \ref{prop_constructmodel} imply that there is a semistable curve $\calC$ over $R$ with smooth generic fiber $C$ and special fiber $C_0$, together with a closed immersion $\calC\hookrightarrow\frakX$. 
By Lemma \ref{lemma_trop=Gamma} we have $\Trop(C\cap T)=\Gamma$. 
By construction of $C_0$, all the initial degenerations of $C$ along open faces of $\Gamma$ are smooth and irreducible, and so the tropical multiplicities are all one. 
Therefore, \cite{BPR11}*{Corollary 6.11} implies that the tropicalization is faithful with respect to the skeleton $\Sigma_{\calC,V}$ of $C^{an}$, where $V=C\setminus(C\cap T)$ is the set of marked points as described in Remark~\ref{remark_markedpoints}.%, since the tropical multiplicities are all one.
\end{proof}

Theorem \ref{T: two} follows from Theorem \ref{T: one} and the following result, which is a slight extension of a theorem of Cartwright--Dudzik--Manjunath--Yao.

\begin{thm}[\cite{CDMY}]
\label{thm: combinatorics}
Let $G$ be a metric graph with rational edge lengths. Set 
\begin{equation*}
n = \max \big\{3, \max \{\deg v - 1 \mid v \in E_G\}\big\}
\end{equation*}
and let $N$ be a free abelian group of rank $n$. Then there exists a non-superabundant, smooth and 3-colorable tropical curve $\Gamma\subseteq N_{\R}$ with rational edge lengths and whose skeleton is isomorphic to $G$.
 \end{thm}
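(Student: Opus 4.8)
The plan is to derive Theorem~\ref{thm: combinatorics} from the construction of Cartwright--Dudzik--Manjunath--Yao and then to upgrade its output to a $3$-colorable tropical curve by subdividing edges. First I would invoke \cite{CDMY}: for a metric graph $G$ with rational edge lengths and $n$ as in the statement, their construction produces a smooth, non-superabundant tropical curve $\Gamma_0\subseteq N_\R$ whose skeleton is isomorphic to $G$. The bound $n=\max\{3,\max_v(\deg v-1)\}$ is exactly what is needed to fit each vertex into a smooth local model: after adjoining the unbounded edges forced by the balancing condition, a vertex of valence $m+1$ can be smooth in the sense of Definition~\ref{definition_smooth_tropical_curve} only if $m\leq n$, and the bounded edges at a vertex $v$ already force $m+1\geq\deg v$. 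Of the three requirements in Theorem~\ref{thm: combinatorics}, only $3$-colorability can fail for $\Gamma_0$, since the abstract graph underlying its skeleton need not be $2$-degenerate --- for instance when $G$ is a tetrahedron.

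To repair this I would subdivide every bounded edge of $\Gamma_0$ once at a rational interior point: if $e$ joins $v_1\in N_\Q$ to $v_2=v_1+\lambda_e\vec{e}$ with $\vec{e}\in N$ primitive and $\lambda_e\in\Q_{>0}$, insert a new $2$-valent vertex at $v_1+\mu_e\vec{e}$ for some rational $0<\mu_e<\lambda_e$; call the resulting tropical curve $\Gamma$. As a subset of $N_\R$ nothing has changed, so $\Gamma$ carries the same embedding and, metrically, the same skeleton as $\Gamma_0$, while the two halves of a subdivided edge acquire the rational lengths $\mu_e$ and $\lambda_e-\mu_e$. Smoothness is preserved: at an old vertex the multiset of incident edge directions --- hence the basis and integer $d$ of Definition~\ref{definition_smooth_tropical_curve} that certify smoothness there --- is unchanged, whereas at a new $2$-valent vertex the two incident edges point along $\vec{e}$ and $-\vec{e}$, and since $\vec{e}$ extends to a $\Z$-basis of $N$ the condition holds with $d=1$.

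There remain two verifications. Non-superabundancy is the substantive one, and is the insight we owe to Cartwright: subdivision is a homotopy equivalence, so the natural map identifies $\HH_1(\Gamma)$ with $\HH_1(\Gamma_0)$, a cycle of $\Gamma_0$ running through a bounded edge $e$ lifting to the cycle of $\Gamma$ running through both halves $e',e''$ of $e$ with the same coefficient. The abundancy maps $\Phi_\Gamma$ and $\Phi_{\Gamma_0}$ of Definition~\ref{D: abundancy} then fit into a square whose top map is the surjection sending a length family $(\ell_f)_f$ on $\Gamma$ to the family on $\Gamma_0$ with value $\ell_{e'}+\ell_{e''}$ on each old edge $e$, and whose bottom map is the isomorphism $\Hom\big(\HH_1(\Gamma),N_\R\big)\cong\Hom\big(\HH_1(\Gamma_0),N_\R\big)$ just described; this square commutes because $\vec{e}'=\vec{e}''=\vec{e}$, so surjectivity of $\Phi_{\Gamma_0}$ forces surjectivity of $\Phi_\Gamma$. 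For $3$-colorability, the abstract graph underlying the skeleton of $\Gamma$ is the skeleton of $\Gamma_0$ with every edge subdivided once; ordering all of its old vertices first, in any order, and then all of its new midpoint vertices yields an ordering as in Definition~\ref{definition_tame_tropical_curve}, because an old vertex precedes all of its neighbours --- which are midpoint vertices --- and each midpoint vertex is preceded exactly by the two old endpoints of its edge. By the equivalence with $2$-degeneracy recorded after Definition~\ref{definition_tame_tropical_curve}, this is $3$-colorability, and Theorem~\ref{thm: combinatorics} follows. The main obstacle in carrying out this plan is not geometric but one of reconciling conventions: one must check that \cite{CDMY} indeed supplies ``smooth'' and ``non-superabundant'' curves in our precise senses and in rank exactly $n$, and that their edges may be subdivided over $N_\Q$ without disturbing either property.
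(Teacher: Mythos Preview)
Your plan and the paper's proof diverge on what \cite{CDMY} actually delivers, and you have it backwards. The paper records, citing \cite{CDMY}*{Remark 2.8}, that the Cartwright--Dudzik--Manjunath--Yao curve $\Gamma$ is already smooth \emph{and} $3$-colorable; what \cite{CDMY} does not establish is non-superabundance. The reservation you voice in your closing sentence is therefore decisive rather than cosmetic: the one property you outsource to \cite{CDMY} is precisely the one they leave open, and your subdivision argument---which only shows that surjectivity of $\Phi_{\Gamma_0}$ implies surjectivity of $\Phi_\Gamma$---cannot manufacture non-superabundance where none was given.

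The paper's actual contribution here is a direct verification of non-superabundance that exploits a structural feature of the CDMY embedding: each edge of $G$ is realized in $N_\R$ as a broken path containing, for every $i$, a segment parallel to the chosen basis vector $v_i$. Fixing a spanning tree $\mathbb{T}$ of the underlying graph $G'$ of the skeleton, the edges $\epsilon\notin\mathbb{T}$ index a basis $\{c_\epsilon\}$ of $\HH_1(\Gamma)$, and for each such $\epsilon$ and each $i$ one can find a segment $e$ of $G'$ with $\vec{e}$ parallel to $v_i$ that lies on $c_\epsilon$ and on no other basis cycle; the length vector supported on $e$ alone then maps under $\Phi_\Gamma$ to the generator $c_{\epsilon'}\mapsto \delta_{\epsilon\epsilon'}\,v_i$ of $\Hom(\HH_1(\Gamma),N_\R)$. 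Your individual lemmas---that subdividing at a rational point preserves smoothness and non-superabundance, and that a once-subdivided graph is $2$-degenerate---are all correct, but they repair the property that was never broken while leaving the genuine gap untouched.
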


\begin{proof} Fix an integral basis $v_1,\ldots, v_n$ of $N_\R$. Let $\Gamma$ be a tropical curve as given by \cite{CDMY}*{Theorem 1.1}. As noted in \cite{CDMY}*{Remark 2.8}, the only part that is not included there is the non-superabundance of $\Gamma$.  
Let $G'$ be the minimal finite graph underlying the skeleton of $\Gamma$ and consider a spanning tree $\mathbb{T}$ of $G'$.
Then the set of $\epsilon\in E_{\tilde{G}\setminus\mathbb{T}}$ parametrizes a basis $\{c_\epsilon\}$ of $\HH_1(\Gamma,\mathbb Z)$, and $c_\epsilon$ is the only element of the basis which contains $\epsilon$.

In order to show the surjectivity of the abundancy map it is enough to show that for all $\epsilon\in E_{\tilde{G}\setminus\mathbb{T}}$ and $1\leq i\leq n$ the homomorphisms
\begin{equation*}
\begin{split}
f_{\epsilon,i}\mathrel{\mathop:}\HH_1(\Gamma)&\longrightarrow N_\R\\
c_{\epsilon'} &\longmapsto \begin{cases} v_i &\mbox{if } \epsilon' =\epsilon \\
0 & \mbox{if } \epsilon' \neq \epsilon. \end{cases}\end{split}
\end{equation*}
are in the image of $\Phi_\Gamma$. By the construction of \cite{CDMY}, every $\epsilon$ will contain an edge $e$ of $G'$ such that $\vec{e}$ is parallel to $v_i$. 
Since $c_\epsilon$ is the only element of the basis which contains $e$, if we take the vector $\ell\in\R^{\#E_{G'}}$ with value $|v_i|$ in the $e$-th entry and $0$ otherwise, we obtain $\Phi(\ell)=f_{\epsilon,i}$.
\end{proof}

\begin{rem}
It is possible to extend Theorem~\ref{T: one}, and therefore Theorem~\ref{T: two}, to the equicharacteristic $p >0$ case.
Let $\Gamma\subset N_\R$ be a non-superabundant, smooth and 3-colorable tropical curve. 
Then for all but finitely many prime numbers $p$ there exists a finite field extension $K$ of $\mathbb F_p((t))$ such that we can construct a suitable curve $C_{0}$ in the special fiber of $\frakX$ using the method of Section~\ref{section_specialfiber}.
The results of Section~\ref{section_logdef} remain valid over any discrete valuation ring of the form $k[[t]]$, where $k$ is an arbitrary field, so in particular they hold over $R$.
Observe that the abundancy map $\Phi_\Gamma$ defined in \ref{D: abundancy} is the base change $\Phi_{\Gamma,\Z}\otimes\R$ of the map $\Phi_{\Gamma,\mathbb Z}\colon \mathbb Z^{E_G} \to \Hom\big(\HH_1(\Gamma,\Z), N\big)$ defined by 
\begin{equation*}
(\ell_e) \longmapsto \Big(\sum_{e \in E_G} a_e [e] \mapsto \sum_{e\in E_G} \ell_e a_e \vec{e} \Big)  
\end{equation*}
as in Definition \ref{D: abundancy}. For a field $L$ we denote by $\Phi_{\Gamma,L}$ the base change of $\Phi_{\Gamma,\mathbb Z}$ to $L$.
Given two fields $L$ and $L'$ of the same characteristic, the surjectivity of $\Phi_{\Gamma,L}$ is equivalent to the surjectivity of $\Phi_{\Gamma,L'}$, and in particular $\Gamma$ is non-superabundant if and only if $\Phi_{\Gamma,\Q}$ is surjective. In this case the map $\Phi_{\Gamma,\mathbb F_p}$ is surjective for all but finitely many prime numbers $p$. 
Therefore, if $\Gamma$ is non-superabundant, log deformations of $C_{0}$ can be constructed for $p$ big enough and it follows that, given a tropical curve $\Gamma$ satisfying the hypotheses of Theorem~\ref{T: one},  for all but finitely many prime numbers $p$ we can find a finite field extension $K$ of $\mathbb F_p((t))$ such that the conclusion of Theorem~\ref{T: one} holds over $K$.
\end{rem}

%%%%%%%%%%%%%%%%%%%%%%%%%%%%%%%%%%%%%%%%%%%%%%%%%%%%%%
%\bibliographystyle{unsrt}
\bibliographystyle{alpha}
\bibliography{biblio}{}

\end{document}